\begin{document}

\title*{ A Half-Discrete Hardy-Hilbert-Type Inequality with a Best Possible
Constant Factor Related to the Hurwitz Zeta Function }
\titlerunning{  A Half-Discrete Hardy-Hilbert-Type Inequality }
\author{Michael Th. Rassias and Bicheng Yang}
\institute{ M.~Th.~Rassias \at Department of Mathematics, ETH-Z\"{u}rich, R\"{a}mistrasse 101,
8092 Z\"{u}rich, Switzerland\\ \& Department of Mathematics, Princeton University, Fine Hall,
Washington Road, Princeton, NJ 08544-1000, USA\\ \email{michail.rassias@math.ethz.ch,
michailrassias@math.princeton.edu} \\ B. Yang \\ Department of Mathematics, Guangdong University of
 Education, Guangzhou,\\ Guangdong
 510303, P. R. China, \\
 e-mail: {bcyang@gdei.edu.cn\,\,\,\,bcyang818@163.com} }
%
%
\maketitle

\begin{abstract}
{Using methods of weight functions, techniques of real analysis as well as the Hermite-Hadamard inequality, a
half-discrete Hardy-Hilbert-type inequality with multi-parameters and a best possible constant
factor related to the Hurwitz zeta function and the Riemann zeta function is obtained.
Equivalent forms, normed operator expressions, their reverses and some particular
cases are also considered.\\}
\end{abstract}
\textbf{Keywords} { Hardy-Hilbert-type inequality; Hurwitz zeta function; Riemann zeta function; weight function; operator}\\
\textbf{\ }
\textbf{2000 Mathematics Subject Classification:}{ 26D15 $\cdot$ 47A07 $\cdot$ 11Y35  $\cdot$ 31A10 $\cdot$ 65B10}

\section{Introduction}
If $p>1,\frac{1}{p}+\frac{1}{q}=1,f(x),g(y)\geq 0,f\in L^{p}(\mathbf{R}%
_{+}),g\in L^{q}(\mathbf{R}_{+}),$ $$||f||_{p}=(\int_{0}^{\infty
}f^{p}(x)dx)^{\frac{1}{p}}>0,$$
$||g||_{q}>0,$ then we have the following
Hardy-Hilbert's integral inequality (cf. \cite{HLP}):
\begin{equation}
\int_{0}^{\infty }\int_{0}^{\infty }\frac{f(x)g(y)}{x+y}dxdy<\frac{\pi }{%
\sin (\pi /p)}||f||_{p}||g||_{q},  \label{1.1}
\end{equation}%
where, the constant factor $\frac{\pi }{\sin (\pi /p)}$ is the best
possible. Assuming that $$a_{m},b_{n}\geq 0,a=\{a_{m}\}_{m=1}^{\infty }\in
l^{p},\ b=\{b_{n}\}_{n=1}^{\infty }\in l^{q},$$ $$||a||_{p}=(\sum_{m=1}^{\infty
}a_{m}^{p})^{\frac{1}{p}}>0,\ ||b||_{q}>0,$$ we have the following
Hardy-Hilbert's inequality with the same best constant $\frac{\pi }{\sin
(\pi /p)}$ (cf. \cite{HLP}):%
\begin{equation}
\sum_{m=1}^{\infty }\sum_{n=1}^{\infty }\frac{a_{m}b_{n}}{m+n}<\frac{\pi }{%
\sin (\pi /p)}||a||_{p}||b||_{q}.  \label{1.2}
\end{equation}%
Inequalities (\ref{1.1}) and (\ref{1.2}) are important in Analysis and its
applications (cf. \cite{HLP}, \cite{MPF}, \cite{Y1}, \cite{Y2}, \cite{Y4}).

If $\mu _{i},v _{j}>0(i,j\in \mathbf{N=\{}1,2,\cdots \mathbf{\}}),$%
\begin{equation}
U_{m}:=\sum_{i=1}^{m}\mu _{i},V_{n}:=\sum_{j=1}^{n}\nu _{j}(m,n\in \mathbf{N}%
),  \label{1.3}
\end{equation}%
then we have the following inequality (cf. \cite{HLP}, Theorem 321) :%
\begin{equation}
\sum_{m=1}^{\infty }\sum_{n=1}^{\infty }\frac{\mu _{m}^{1/q}\nu
_{n}^{1/p}a_{m}b_{n}}{U_{m}+V_{n}}<\frac{\pi }{\sin (\pi /p)}%
||a||_{p}||b||_{q}.  \label{1.4}
\end{equation}%
Replacing $\mu _{m}^{1/q}a_{m}$ and $v_{n}^{1/p}b_{n}$ by $a_{m}$
and $b_{n}$ in (\ref{1.4}), respectively, we obtain the following equivalent
form of (\ref{1.4}):%
\begin{equation}
\sum_{m=1}^{\infty }\sum_{n=1}^{\infty }\frac{a_{m}b_{n}}{U_{m}+V_{n}}<\frac{%
\pi }{\sin (\frac{\pi }{p})}\left( \sum_{m=1}^{\infty }\frac{a_{m}^{p}}{\mu
_{m}^{p-1}}\right) ^{\frac{1}{p}}\left( \sum_{n=1}^{\infty }\frac{b_{n}^{q}}{%
\nu _{n}^{q-1}}\right) ^{\frac{1}{q}}.  \label{1.5}
\end{equation}%
For $\mu _{i}=v_{j}=1(i,j\in \mathbf{N}),$ both (\ref{1.4}) and (\ref%
{1.5}) reduce to (\ref{1.2}). We call (\ref{1.4}) and (\ref{1.5}) as
Hardy-Hilbert-type inequalities.

\textbf{Note.} The authors  did not prove that (\ref{1.4}) is
valid with the best possible constant factor in  \cite{HLP}.

In 1998, by introducing an independent parameter $\lambda \in (0,1]$, Yang
\cite{Y3} gave an extension of (\ref{1.1}) with the kernel $1/(x+y)^{\lambda }$ for $p=q=2$. Optimizing the method used in \cite{Y3}, Yang
\cite{Y4} provided some extensions of (\ref{1.1}) and (\ref{1.2}) as follows:

If $\lambda _{1},\lambda _{2}\in \mathbf{R},\lambda _{1}+\lambda
_{2}=\lambda ,k_{\lambda }(x,y)$ is a non-negative homogeneous function of
degree $-\lambda ,$ with $$k(\lambda _{1})=\int_{0}^{\infty }k_{\lambda
}(t,1)t^{\lambda _{1}-1}dt\in \mathbf{R}_{+},$$ $\phi (x)=x^{p(1-\lambda
_{1})-1},\psi (x)=x^{q(1-\lambda _{2})-1},f(x),g(y)\geq 0,$
$$
f\in L_{p,\phi }(\mathbf{R}_{+})=\left\{ f;||f||_{p,\phi
}:=\{\int_{0}^{\infty }\phi (x)|f(x)|^{p}dx\}^{\frac{1}{p}}<\infty \right\} ,
$$
$g\in L_{q,\psi }(\mathbf{R}_{+}),||f||_{p,\phi },||g||_{q,\psi }>0,$ then we have
\begin{equation}
\int_{0}^{\infty }\int_{0}^{\infty }k_{\lambda }(x,y)f(x)g(y)dxdy<k(\lambda
_{1})||f||_{p,\phi }||g||_{q,\psi },  \label{1.6}
\end{equation}%
where, the constant factor $k(\lambda _{1})$ is the best possible.

Moreover,
if $k_{\lambda }(x,y)$ remains finite and $k_{\lambda }(x,y)x^{\lambda
_{1}-1}(k_{\lambda }(x,y)y^{\lambda _{2}-1})$ is decreasing with respect to $%
x>0\:(y>0),$ then for $a_{m,}b_{n}\geq 0,$%
$$
a\in l_{p,\phi }=\left\{ a;||a||_{p,\phi }:=(\sum_{n=1}^{\infty }\phi
(n)|a_{n}|^{p})^{\frac{1}{p}}<\infty \right\} ,
$$
$b=\{b_{n}\}_{n=1}^{\infty }\in l_{q,\psi },$ $||a||_{p,\phi },||b||_{q,\psi
}>0,$ we have
\begin{equation}
\sum_{m=1}^{\infty }\sum_{n=1}^{\infty }k_{\lambda
}(m,n)a_{m}b_{n}<k(\lambda _{1})||a||_{p,\phi }||b||_{q,\psi },  \label{1.7}
\end{equation}%
where, the constant factor $k(\lambda _{1})$ is still the best possible.

Clearly, for $$\lambda =1,k_{1}(x,y)=\frac{1}{x+y},\ \lambda _{1}=\frac{1}{q}%
,\ \lambda _{2}=\frac{1}{p},$$ inequality (\ref{1.6}) reduces to (\ref{1.1}),
while (\ref{1.7}) reduces to (\ref{1.2}). For 
$$0<\lambda _{1},\lambda
_{2}\leq 1,\ \lambda _{1}+\lambda _{2}=\lambda ,$$ we set%
$$
k_{\lambda }(x,y)=\frac{1}{(x+y)^{\lambda }}\ ((x,y)\in \mathbf{R}_{+}^{2}).
$$
Then by (\ref{1.7}), we have
\begin{equation}
\sum_{m=1}^{\infty }\sum_{n=1}^{\infty }\frac{a_{m}b_{n}}{(m+n)^{\lambda }}%
<B(\lambda _{1},\lambda _{2})||a||_{p,\phi }||b||_{q,\psi },  \label{1.8}
\end{equation}%
where, the constant $B(\lambda _{1},\lambda _{2})$ is the best possible, and
$$
B\left( u,v\right) =\int_{0}^{\infty }\frac{1}{(1+t)^{u+v}}%
t^{u-1}dt^{{}}(u,v>0)
$$
is the beta function.

In 2015, subject to further conditions, Yang \cite{Y4a}
proved an extension of (\ref{1.8}) and (\ref{1.5}) as follows:
\begin{eqnarray}
&&\sum_{m=1}^{\infty }\sum_{n=1}^{\infty }\frac{a_{m}b_{n}}{%
(U_{m}+V_{n})^{\lambda }}   \\
&<&B(\lambda _{1},\lambda _{2})\left( \sum_{m=1}^{\infty }\frac{%
U_{m}^{p(1-\lambda _{1})-1}a_{m}^{p}}{\mu _{m}^{p-1}}\right) ^{\frac{1}{p}%
}\left( \sum_{n=1}^{\infty }\frac{V_{n}^{q(1-\lambda _{2})-1}b_{n}^{q}}{\nu
_{n}^{q-1}}\right) ^{\frac{1}{q}},  \label{1.9}
\end{eqnarray}%
where, the constant $B(\lambda _{1},\lambda _{2})$ is still the best
possible.

Further results including some multidimensional Hilbert-type inequalities can be found in \cite{YMP}- \cite{Y6}.

On the topic of half-discrete Hilbert-type inequalities with 
non-homogeneous kernels, Hardy et al. provided a few results in Theorem 351
of \cite{HLP}. But, they did not prove that the constant factors are the
best possible. However, Yang \cite{Y5} presented a result with the kernel $1/
(1+nx)^{\lambda }$ by introducing a variable and proved that the constant
factor is the best possible. In 2011, Yang \cite{122} gave the following
half-discrete Hardy-Hilbert's inequality with the best possible constant
factor $B\left( \lambda _{1},\lambda _{2}\right) $:%
\begin{equation}
\int_{0}^{\infty }f\left( x\right) \left[ \sum_{n=1}^{\infty }\frac{a_{n}}{%
\left( x+n\right) ^{\lambda }}\right] dx<B\left( \lambda _{1},\lambda
_{2}\right) ||f||_{p,\phi }||a||_{q,\psi },  \label{1.10}
\end{equation}%
where, $\lambda _{1}>0$, $0<\lambda _{2}\leq 1$, $\lambda _{1}+\lambda
_{2}=\lambda .$ Zhong et al. (\cite{140}--\cite{144}) investigated
several half-discrete Hilbert-type inequalities with particular kernels.

Using methods of weight functions and techniques of discrete and
integral Hilbert-type inequalities with some additional conditions on the
kernel, a half-discrete Hilbert-type inequality with a general homogeneous
kernel of degree $-\lambda \in \mathbf{R}$ and a best constant factor $%
k\left( \lambda _{1}\right) $ is obtained as follows:%
\begin{equation}
\int_{0}^{\infty }f(x)\sum_{n=1}^{\infty }k_{\lambda }(x,n)a_{n}dx<k(\lambda
_{1})||f||_{p,\phi }||a||_{q,\psi },  \label{1.11}
\end{equation}%
which is an extension of (\ref{1.10}) ( cf. Yang and Chen \cite{125}).
Additionally, a half-discrete Hilbert-type inequality with a general
non-homogeneous kernel and a best constant factor is given by Yang \cite{YB1}%
. The reader is referred to the three books  of Yang et al. \cite{YB8}, \cite{YB9} and
\cite{YB7}, where half-discrete Hilbert-type
inequalities and their operator expressions are extensively treated.

In this paper, using methods of weight functions, techniques of real analysis as well as the Hermite-Hadamard inequality, a
half-discrete Hardy-Hilbert-type inequality with multi-parameters and a best possible constant
factor related to the Hurwitz zeta function and the Riemann zeta function is obtained, which is an extension of (\ref{1.11}) for $\lambda =0$ in a particular
kernel. Equivalent forms, normed operator expressions, their reverses and some particular
cases are also considered.

\section{An Example and Some Lemmas}

In the following, we assume that $\mu _{i},\nu _{j}>0\: (i,j\in \mathbf{N}%
),U_{m}$ and $V_{n}$ are defined by (\ref{1.3}), $$\widetilde{V}_{n}:=V_{n}-%
\widetilde{\nu }_{n}(\widetilde{\nu }_{n}\in \lbrack 0,\frac{\nu _{n}}{2}%
])(n\in \textbf{N}),$$ $\mu (t)$ is a positive continuous function in $\mathbf{R}_{+}=(0,\infty )$%
,
$$
U(x):=\int_{0}^{x}\mu (t)dt<\infty ^{{}}(x\in \lbrack 0,\infty )),
$$
$\nu (t):=\nu _{n},t\in (n-\frac{1}{2},n+\frac{1}{2}](n\in \mathbf{N}),$ and
$$
V(y):=\int_{\frac{1}{2}}^{y}\nu (t)dt^{{}}(y\in \lbrack \frac{1}{2},\infty
)),
$$
$p\neq 0,1,$ $\frac{1}{p}+\frac{1}{q}=1,\delta \in \{-1,1\},$ $%
f(x),a_{n}\geq 0(x\in \mathbf{R}_{+},n\in \mathbf{N}),$ $$||f||_{p,\Phi
_{\delta }}=(\int_{0}^{\infty }\Phi _{\delta }(x)f^{p}(x)dx)^{\frac{1}{p}},$$
$||a||_{q,\widetilde{\Psi }}=(\sum_{n=1}^{\infty }\widetilde{\Psi }%
(n)b_{n}^{q})^{\frac{1}{q}},$ where,
$$
\Phi _{\delta }(x):=\frac{U^{p(1-\delta \sigma )-1}(x)}{\mu ^{p-1}(x)},%
\widetilde{\Psi }(n):=\frac{\widetilde{V}_{n}^{q(1-\sigma )-1}}{\nu
_{n}^{q-1}}(x\in \mathbf{R}_{+},n\in \mathbf{N}).
$$

\begin{example}
 For $0<\gamma <\sigma,0\leq \alpha \leq
\rho\ (\rho >0),$ $$\csc h(u):=\frac{2}{e^{u}-e^{-u}}\ (u>0)$$ is the
hyperbolic cosecant function (cf. \cite{ZYQ}). \ We set%
$$
h(t)=\frac{\csc h(\rho t^{\gamma })}{e^{\alpha t^{\gamma }}}\ (t\in \mathbf{R}%
_{+}).
$$
\end{example}

(i) Setting $u=\rho t^{\gamma},$  we find%
\begin{eqnarray*}
k(\sigma ) &:=&\int_{0}^{\infty }\frac{\csc h(\rho t^{\gamma })}{e^{\alpha
t^{\gamma }}}t^{\sigma -1}dt \\
&=&\frac{1}{\gamma \rho ^{\sigma /\gamma }}%
\int_{0}^{\infty }\frac{\csc h(u)}{e^{\frac{\alpha }{\rho }u}}u^{\frac{%
\sigma }{\gamma }-1}du \\
&=&\frac{2}{\gamma \rho ^{\sigma /\gamma }}\int_{0}^{\infty }\frac{e^{-\frac{%
\alpha }{\rho }u}u^{\frac{\sigma }{\gamma }-1}}{e^{u}-e^{-u}}du \\
&=&\frac{2}{\gamma \rho ^{\sigma /\gamma }}\int_{0}^{\infty }\frac{e^{-(%
\frac{\alpha }{\rho }+1)u}u^{\frac{\sigma }{\gamma }-1}}{1-e^{-2u}}du \\
&=&\frac{2}{\gamma \rho ^{\sigma /\gamma }}\int_{0}^{\infty
}\sum_{k=0}^{\infty }e^{-(2k+\frac{\alpha }{\rho }+1)u}u^{\frac{\sigma }{%
\gamma }-1}du.
\end{eqnarray*}%
By the Lebesgue term by term integration theorem (cf. \cite{ZYQ}), setting $v=\left(2k+\frac{\alpha}{\rho}+1\right)u$, we have%
\begin{eqnarray}
k(\sigma ) &=&\int_{0}^{\infty }\frac{\csc h(\rho t^{\gamma })}{e^{\alpha
t^{\gamma }}}t^{\sigma -1}dt  \nonumber \\
&=&\frac{2}{\gamma \rho ^{\sigma /\gamma }}\sum_{k=0}^{\infty
}\int_{0}^{\infty }e^{-(2k+\frac{\alpha }{\rho }+1)u}u^{\frac{\sigma }{%
\gamma }-1}du  \nonumber \\
&=&\frac{2}{\gamma \rho ^{\sigma
/\gamma }}\sum_{k=0}^{\infty }\frac{1}{(2k+\frac{\alpha }{\rho }+1)^{\sigma
/\gamma }}\int_{0}^{\infty }e^{-v}v^{\frac{\sigma }{\gamma }-1}dv  \nonumber \\
&=&\frac{2\Gamma (\frac{\sigma }{\gamma })}{\gamma (2\rho )^{\sigma /\gamma }%
}\sum_{k=0}^{\infty }\frac{1}{(k+\frac{\alpha +\rho }{2\rho })^{\sigma
/\gamma }}  \nonumber \\
&=&\frac{2\Gamma (\frac{\sigma }{\gamma })}{\gamma (2\rho )^{\sigma /\gamma }%
}\zeta (\frac{\sigma }{\gamma },\frac{\alpha +\rho }{2\rho })\in \mathbf{R}%
_{+},  \label{2.1}
\end{eqnarray}%
where, $$\zeta (s,a):=\sum_{k=0}^{\infty }\frac{1}{(k+a)^{s}}\ (s>1;0<a\leq 1)$$
is the Hurwitz zeta function, $\zeta (s)=\zeta (s,1)$ is the Riemann
zeta function, and%
$$
\Gamma (y):=\int_{0}^{\infty }e^{-v}v^{y-1}dv^{{}}\ (y>0)
$$
is the Gamma function (cf. \cite{Y4b}).

In particular, (1) for $\alpha =\rho >0,$we have $h(t)=\frac{\csc h(\rho
t^{\gamma })}{e^{\rho t^{\gamma }}}$ and $k(\sigma )=\frac{2\Gamma (\frac{%
\sigma }{\gamma })\zeta (\frac{\sigma }{\gamma })}{\gamma (2\rho )^{\sigma
/\gamma }}.$ In this case, for $\gamma =\frac{\sigma }{2},$ we have $h(t)=%
\frac{\csc h(\rho \sqrt{t^{\sigma }})}{e^{\rho \sqrt{t^{\sigma }}}}$ and $%
k(\sigma )=\frac{\pi ^{2}}{6\sigma \rho ^{2}};$ (2) for $\alpha =0,$ we have
$h(t)=\csc h(\rho t^{\gamma })$ and $\frac{2\Gamma (\frac{\sigma }{\gamma })%
}{\gamma (2\rho )^{\sigma /\gamma }}\zeta (\frac{\sigma }{\gamma },\frac{1}{2%
}).$ In this case, for $\gamma =\frac{\sigma }{2},$ we find $h(t)=\csc
h(\rho \sqrt{t^{\sigma }})$ and $k(\sigma )=\frac{\pi ^{2}}{2\sigma \rho ^{2}%
}.$

(ii) We obtain for $u>0,\frac{1}{e^{u}-e^{-u}}>0,$%
\begin{eqnarray*}
\frac{d}{du}(\frac{1}{e^{u}-e^{-u}}) &=&-\frac{e^{u}+e^{-u}}{%
(e^{u}-e^{-u})^{2}}<0, \\
\frac{d^{2}}{du^{2}}(\frac{1}{e^{u}-e^{-u}}) &=&\frac{%
2(e^{u}+e^{-u})^{2}-(e^{u}-e^{-u})^{2}}{(e^{u}-e^{-u})^{3}}>0.
\end{eqnarray*}

If $g(u)>0,\:g^{\prime }(u)<0,\:g^{\prime \prime }(u)>0,$ then for $0<\gamma
\leq 1,$%
\begin{eqnarray*}
g(\rho t^{\gamma }) &>&0,\frac{d}{dt}g(\rho t^{\gamma })=\rho \gamma
t^{\gamma -1}g^{\prime }(\rho t^{\gamma })<0, \\
\frac{d^{2}}{dt^{2}}g(\rho t^{\gamma }) &=&\rho \gamma (\gamma -1)t^{\gamma
-2}g^{\prime }(\rho t^{\gamma })+\rho ^{2}\gamma ^{2}t^{2\gamma -2}g^{\prime
\prime }(\rho t^{\gamma })>0;
\end{eqnarray*}%
for $y\in (n-\frac{1}{2},n+\frac{1}{2}),g(V(y))>0,$%
\begin{eqnarray*}
\frac{d}{dy}g(V(y)) &=&g^{\prime }(V(y))\nu _{n}<0, \\
\frac{d^{2}}{dy^{2}}g(V(y)) &=&g^{\prime \prime }(V(y))\nu _{n}^{2}>0(n\in
\mathbf{N}).
\end{eqnarray*}

If $g_{i}(u)>0,g_{i}^{\prime }(u)<0,g_{i}^{\prime \prime }(u)>0(i=1,2),$
then
\begin{eqnarray*}
g_{1}(u)g_{2}(u) &>&0, \\
(g_{1}(u)g_{2}(u))^{\prime } &=&g_{1}^{\prime
}(u)g_{2}(u)+g_{1}(u)g_{2}^{\prime }(u)<0, \\
(g_{1}(u)g_{2}(u))^{\prime \prime } &=&g_{1}^{\prime \prime
}(u)g_{2}(u)+2g_{1}^{\prime }(u)g_{2}^{\prime }(u)+g_{1}(u)g_{2}^{\prime
\prime }(u)>0(u>0).
\end{eqnarray*}

(iii) Therefore, for $0<\gamma<\sigma \leq 1, 0\leq \alpha \leq
\rho (\rho >0),$ we have $k(\sigma)\in \textbf{R}_{+},$ with $ h(t)>0,h^{\prime
}(t)<0,h^{\prime \prime }(t)>0,$ and then for $c>0,y\in(n-\frac{1}{2},n+\frac{1}{2})(n\in \textbf{N}),$ it follows that
\begin{eqnarray*}
&&h(cV(y))V^{\sigma -1}(y) >0,\\&& \frac{d}{dy}h(cV(y))V^{\sigma -1}(y)<0, \\
&&\frac{d^{2}}{dy^{2}}h(cV(y))V^{\sigma -1}(y) >0.
\end{eqnarray*}

\begin{lemma}
 If $g(t)(>0)$ is decreasing in $\mathbf{R}_{+}$ and
strictly decreasing in $[n_{0},\infty )$ where $n_{0}\in \mathbf{N},$ satisfying $%
\int_{0}^{\infty }g(t)dt\in \mathbf{R}_{+},$ then we have%
\begin{equation}
\int_{1}^{\infty }g(t)dt<\sum_{n=1}^{\infty }g(n)<\int_{0}^{\infty }g(t)dt.
\label{2.2}
\end{equation}
\end{lemma}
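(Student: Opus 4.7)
The plan is to imitate the classical proof of the integral test. First I would partition $[0,\infty)$ into the unit intervals $[n,n+1]$ for $n\in\mathbf{N}\cup\{0\}$ and compare $g(n)$ with $\int_n^{n+1}g(t)\,dt$. Since $g$ is decreasing on $\mathbf{R}_+$, for any $t\in[n,n+1]$ one has $g(n+1)\le g(t)\le g(n)$, so integrating in $t$ over $[n,n+1]$ gives the two chain inequalities
\begin{equation*}
g(n+1)\;\le\;\int_{n}^{n+1}g(t)\,dt\;\le\;g(n).
\end{equation*}

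Next I would sum these bounds. Summing the left-hand inequality $\int_{n}^{n+1}g(t)\,dt\le g(n)$ over $n\ge 1$ and using the convergence $\int_0^\infty g(t)\,dt\in\mathbf{R}_+$ to justify the interchange of sum and integral (everything is non-negative, so monotone convergence applies) yields the weak form $\int_1^\infty g(t)\,dt\le\sum_{n=1}^\infty g(n)$. Summing the right-hand inequality $g(n+1)\le\int_n^{n+1}g(t)\,dt$ over $n\ge 0$ gives $\sum_{n=1}^\infty g(n)\le\int_0^\infty g(t)\,dt$.

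The remaining, and really the only non-routine, step is to upgrade both inequalities to strict ones; this is precisely where the hypothesis of strict monotonicity on $[n_0,\infty)$ is used. On the interval $[n_0,n_0+1]$, strict decrease means $g(t)<g(n_0)$ on the set $(n_0,n_0+1]$ of positive measure, so $\int_{n_0}^{n_0+1}g(t)\,dt<g(n_0)$; analogously $g(n_0+1)<\int_{n_0}^{n_0+1}g(t)\,dt$. Substituting these strict comparisons into one term of each telescoped sum above (and leaving the remaining non-strict comparisons intact) immediately converts the weak inequalities to the strict inequalities asserted in \eqref{2.2}. I anticipate no serious obstacle; the only subtlety is that strictness does \emph{not} follow from pointwise strict inequality alone but from strict inequality on a set of positive measure, which is guaranteed here because $g$ is monotone on $[n_0,\infty)$ and therefore has at most countably many discontinuities, so strict monotonicity in fact gives $g(t)<g(n_0)$ throughout $(n_0,n_0+1]$.
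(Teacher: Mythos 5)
Your proposal is correct and follows essentially the same route as the paper: the standard integral-test comparison $g(n+1)\leq\int_{n}^{n+1}g(t)\,dt\leq g(n)$ summed over all $n$, with strictness injected at a single term near $n_{0}$ using the strict decrease of $g$ on $[n_{0},\infty)$ (the paper merely organizes this by splitting the sum at $n_{0}+1$ and treating the head and tail separately). The closing remark about discontinuities is superfluous — strict monotonicity already gives $g(t)<g(n_{0})$ for every $t>n_{0}$, which suffices for strict inequality of the integrals — but this does not affect the validity of the argument.
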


\begin{proof}
Since we have
\begin{eqnarray*}
\int_{n}^{n+1}g(t)dt &\leq &g(n)\leq \int_{n-1}^{n}g(t)dt(n=1,\cdots ,n_{0}),
\\
\int_{n_{0}+1}^{n_{0}+2}g(t)dt &<&g(n_{0}+1)<\int_{n_{0}}^{n_{0}+1}g(t)dt,
\end{eqnarray*}%
then it follows that%
$$
0<\int_{1}^{n_{0}+2}g(t)dt<\sum_{n=1}^{n_{0}+1}g(n)<\sum_{n=1}^{n_{0}+1}%
\int_{n-1}^{n}g(t)dt=\int_{0}^{n_{0}+1}g(t)dt<\infty .
$$
Similarly, we still have
$$
0<\int_{n_{0}+2}^{\infty }g(t)dt\leq \sum_{n=n_{0}+2}^{\infty }g(n)\leq
\int_{n_{0}+1}^{\infty }g(t)dt<\infty .
$$
Hence, (\ref{2.2}) follows and therefore the lemma is proved.
\end{proof}

\begin{lemma}
  If $0\leq \alpha \leq \rho (\rho >0),0<\gamma
<\sigma \leq 1,$ define the following weight coefficients:%
\begin{eqnarray}
\omega _{\delta }(\sigma ,x) &:&=\sum_{n=1}^{\infty }\frac{\csc h(\rho
(U^{\delta }(x)\widetilde{V}_{n})^{\gamma })}{e^{\alpha (U^{\delta }(x)%
\widetilde{V}_{n})^{\gamma }}}\frac{U^{\delta \sigma }(x)\nu _{n}}{%
\widetilde{V}_{n}^{1-\sigma }},x\in \mathbf{R}_{+},  \label{2.3} \\
\varpi _{\delta }(\sigma ,n) &:&=\int_{0}^{\infty }\frac{\csc h(\rho
(U^{\delta }(x)\widetilde{V}_{n})^{\gamma })}{e^{\alpha (U^{\delta }(x)%
\widetilde{V}_{n})^{\gamma }}}\frac{\widetilde{V}_{n}^{\sigma }\mu (x)}{%
U^{1-\delta \sigma }(x)}dx,n\in \mathbf{N}.  \label{2.4}
\end{eqnarray}%
Then, we have the following inequalities:
\begin{eqnarray}
\omega _{\delta }(\sigma ,x) &<&k(\sigma )(x\in \mathbf{R}_{+}),  \label{2.5}
\\
\varpi _{\delta }(\sigma ,n) &\leq &k(\sigma )(n\in \mathbf{N}),  \label{2.6}
\end{eqnarray}%
where, $k(\sigma )$ is given by (\ref{2.1}).
\end{lemma}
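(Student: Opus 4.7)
The plan is to treat $\varpi_\delta(\sigma, n)$ by a direct change of variables and $\omega_\delta(\sigma, x)$ by combining monotonicity, the Hermite-Hadamard inequality, and the same change of variables. The former will turn out to be an identity $\varpi_\delta(\sigma, n) = k(\sigma)$; the latter is where the convexity statement from part (iii) of Example 1 enters.

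For $\varpi_\delta(\sigma, n)$, I would substitute $t = U^\delta(x)\widetilde{V}_n$. For $\delta = 1$ one has $\mu(x)\,dx = dt/\widetilde{V}_n$ and $U(x) = t/\widetilde{V}_n$, so the factor $\widetilde{V}_n^\sigma \mu(x)/U^{1-\sigma}(x)$ collapses to $t^{\sigma-1}\,dt$; for $\delta = -1$ the limits reverse but the resulting sign cancels to yield the same identity. In either case $\varpi_\delta(\sigma, n) = \int_0^\infty h(t)t^{\sigma-1}\,dt = k(\sigma)$, and (\ref{2.6}) follows.

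For $\omega_\delta(\sigma, x)$ with $x$ fixed, set $g(y) := h(U^\delta(x) V(y)) V^{\sigma-1}(y)$. Since $\widetilde{V}_n = V_n - \widetilde{\nu}_n \geq V_n - \nu_n/2 = V(n)$, and since $h$ is positive and decreasing while $\sigma - 1 \leq 0$, both factors of $h(U^\delta(x) \widetilde{V}_n)\widetilde{V}_n^{\sigma-1}$ are dominated by their values at $\widetilde{V}_n = V(n)$, giving
\[
\frac{h(U^\delta(x)\widetilde{V}_n)}{\widetilde{V}_n^{1-\sigma}} \leq g(n).
\]
Part (iii) of Example 1 shows $g$ is positive and strictly convex on each interval $(n-\tfrac12, n+\tfrac12)$, so by the Hermite-Hadamard inequality applied at the midpoint $n$ (strictly, as $g$ is not affine),
\[
g(n) < \int_{n-1/2}^{n+1/2} g(y)\,dy.
\]
Multiplying by $\nu_n = \nu(y)$ on this interval, summing over $n$, and using that $V(1/2) = 0$, one obtains
\[
\omega_\delta(\sigma, x) \leq U^{\delta\sigma}(x)\sum_{n=1}^\infty \nu_n g(n) < U^{\delta\sigma}(x)\int_{1/2}^\infty h(U^\delta(x) V(y)) V^{\sigma-1}(y) \nu(y)\,dy.
\]

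The final step is the substitution $t = U^\delta(x) V(y)$, under which $\nu(y)\,dy = dt/U^\delta(x)$ and $V(y) = tU^{-\delta}(x)$; a short algebraic reduction turns the right-hand integral into $U^{-\delta\sigma}(x) k(\sigma)$, the factor $U^{\delta\sigma}(x)$ cancels, and (\ref{2.5}) drops out. The main technical hurdle is justifying the strict Hermite-Hadamard step: this is exactly why part (iii) of Example 1 is needed, as it provides precisely the strict convexity of $g$ on each subinterval, ensuring the final inequality is strict even when $\widetilde{\nu}_n = \nu_n/2$ for every $n$.
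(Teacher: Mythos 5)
Your treatment of $\omega_{\delta}(\sigma,x)$ is essentially the paper's own argument: the bound $\widetilde{V}_{n}\geq V_{n}-\nu_{n}/2=V(n)$, the monotonicity of $h$ together with $\sigma-1\leq 0$ to pass to $g(n)$, the strict midpoint Hermite--Hadamard inequality on $(n-\frac12,n+\frac12)$ justified by Example 1(iii), and finally the substitution $t=U^{\delta}(x)V(y)$. One small caution there: after the substitution the integral runs over $(0,\,U^{\delta}(x)V(\infty))$ (or its reverse for $\delta=-1$), which need not be all of $(0,\infty)$ since Lemma 2 does not assume $V(\infty)=\infty$; this only helps you, because the integrand is positive, so the estimate closes with ``$\leq k(\sigma)$'' and strictness is already secured by Hermite--Hadamard. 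So (\ref{2.5}) is fine.

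The genuine flaw is in your claim that $\varpi_{\delta}(\sigma,n)=k(\sigma)$ identically. The hypotheses of Lemma 2 only give $U(x)=\int_{0}^{x}\mu(t)\,dt<\infty$ for each finite $x$; they do \emph{not} force $U(\infty)=\infty$. Under the substitution $t=U^{\delta}(x)\widetilde{V}_{n}$ the new variable ranges over $(0,\,\widetilde{V}_{n}U(\infty))$ when $\delta=1$ and over $(\widetilde{V}_{n}U^{-1}(\infty),\,\infty)$ when $\delta=-1$, so in general you obtain $\varpi_{\delta}(\sigma,n)=\int_{a}^{b}h(t)t^{\sigma-1}\,dt$ over a possibly proper subinterval of $(0,\infty)$, whence only $\varpi_{\delta}(\sigma,n)\leq k(\sigma)$ --- which is exactly why the lemma states (\ref{2.6}) with ``$\leq$'' and why the paper's Remark 1 singles out $U(\infty)=\infty$ as the condition for equality (illustrated there with $\mu(t)=(1+t)^{-\beta}$). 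Your stated conclusion (\ref{2.6}) still follows once you replace the asserted identity by this one-sided comparison using the positivity of the integrand, but as written the step ``it is an identity'' is false and should be corrected.
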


\begin{proof}
  Since we find
\begin{eqnarray*}
\widetilde{V}_{n} &=&V_{n}-\widetilde{\nu }_{n}\geq V_{n}-\frac{\nu _{n}}{2}
\\
&=&\int_{\frac{1}{2}}^{n+\frac{1}{2}}\nu (t)dt-\int_{n}^{n+\frac{1}{2}}\nu
(t)dt=\int_{\frac{1}{2}}^{n}\nu (t)dt=V(n),
\end{eqnarray*}%
and for $t\in (n-\frac{1}{2},n+\frac{1}{2}],V^{\prime }(t)=\nu _{n},$ then
by Example 1(iii) and Hermite-Hadamard's inequality (cf. \cite{K1}), we have
\begin{eqnarray*}
&&\frac{\csc h(\rho (U^{\delta }(x)\widetilde{V}_{n})^{\gamma })}{e^{\alpha
(U^{\delta }(x)\widetilde{V}_{n})^{\gamma }}}\frac{\nu _{n}}{\widetilde{V}%
_{n}^{1-\sigma }} \\
&\leq &\frac{\csc h(\rho (U^{\delta }(x)V(n))^{\gamma })}{e^{\alpha
(U^{\delta }(x)V(n))^{\gamma }}}\frac{\nu _{n}}{V^{1-\sigma }(n)} \\
&<&\int_{n-\frac{1}{2}}^{n+\frac{1}{2}}\frac{\csc h(\rho (U^{\delta
}(x)V(t))^{\gamma })}{e^{\alpha (U^{\delta }(x)V(t))^{\gamma }}}\frac{%
V^{\prime }(t)}{V^{1-\sigma }(t)}dt,
\end{eqnarray*}%
\begin{eqnarray*}
\omega _{\delta }(\sigma ,x) &<&\sum_{n=1}^{\infty }\int_{n-\frac{1}{2}}^{n+%
\frac{1}{2}}\frac{\csc h(\rho (U^{\delta }(x)V(t))^{\gamma })}{e^{\alpha
(U^{\delta }(x)V(t))^{\gamma }}}\frac{U^{\delta \sigma }(x)V^{\prime }(t)}{%
V^{1-\sigma }(t)}dt \\
&=&\int_{\frac{1}{2}}^{\infty }\frac{\csc h(\rho (U^{\delta
}(x)V(t))^{\gamma })}{e^{\alpha (U^{\delta }(x)V(t))^{\gamma }}}\frac{%
U^{\delta \sigma }(x)V^{\prime }(t)}{V^{1-\sigma }(t)}dt.
\end{eqnarray*}%
Setting $u=U^{\delta }(x)V(t),$ by (\ref{2.1}), we find%
\begin{eqnarray*}
\omega _{\delta }(\sigma ,x) &<&\int_{0}^{U^{\delta }(x)V(\infty )}\frac{%
\csc h(\rho u^{\gamma })}{e^{\alpha u^{\gamma }}}\frac{U^{\delta \sigma
}(x)U^{-\delta }(x)}{(uU^{-\delta }(x))^{1-\sigma }}du \\
&\leq &\int_{0}^{\infty }\frac{\csc h(\rho u^{\gamma })}{e^{\alpha u^{\gamma
}}}u^{\sigma -1}du=k(\sigma ).
\end{eqnarray*}%
Hence, (\ref{2.5}) follows.

Setting $u=\widetilde{V}_{n}U^{\delta }(x)$ in (\ref{2.4}), we find $%
du=\delta \widetilde{V}_{n}U^{\delta -1}(x)\mu (x)dx$ and
\begin{eqnarray*}
\varpi _{\delta }(\sigma ,n) &=&\frac{1}{\delta }\int_{\widetilde{V}%
_{n}U^{\delta }(0)}^{\widetilde{V}_{n}U^{\delta }(\infty )}\frac{\csc h(\rho
u^{\gamma })}{e^{\alpha u^{\gamma }}}\frac{\widetilde{V}_{n}^{\sigma }%
\widetilde{V}_{n}^{-1}(\widetilde{V}_{n}^{-1}u)^{\frac{1}{\delta }-1}}{(%
\widetilde{V}_{n}^{-1}u)^{\frac{1}{\delta }-\sigma }}du \\
&=&\frac{1}{\delta }\int_{\widetilde{V}_{n}U^{\delta }(0)}^{\widetilde{V}%
_{n}U^{\delta }(\infty )}\frac{\csc h(\rho u^{\gamma })}{e^{\alpha u^{\gamma
}}}u^{\sigma -1}du.
\end{eqnarray*}%
If $\delta =1,$ then%
\begin{eqnarray*}
\varpi _{1}(\sigma ,n) &=&\int_{0}^{\widetilde{V}_{n}U(\infty )}\frac{\csc
h(\rho u^{\gamma })}{e^{\alpha u^{\gamma }}}u^{\sigma -1}du \\
&\leq &\int_{0}^{\infty }\frac{\csc h(\rho u^{\gamma })}{e^{\alpha u^{\gamma
}}}u^{\sigma -1}du.
\end{eqnarray*}%
If $\delta =-1,$ then%
\begin{eqnarray*}
\varpi _{-1}(\sigma ,n) &=&-\int_{\infty }^{\widetilde{V}_{n}U^{-1}(\infty )}%
\frac{\csc h(\rho u^{\gamma })}{e^{\alpha u^{\gamma }}}u^{\sigma -1}du \\
&\leq &\int_{0}^{\infty }\frac{\csc h(\rho u^{\gamma })}{e^{\alpha u^{\gamma
}}}u^{\sigma -1}du.
\end{eqnarray*}%
Then by (\ref{2.1}), we have (\ref{2.6}).  The lemma is proved.
\end{proof}

\begin{remark}
  We do not need the constraint $\sigma \leq 1$ to
obtain (\ref{2.6}). If $U(\infty )=\infty ,$ then we have
\begin{equation}
\varpi _{\delta }(\sigma ,n)=k(\sigma )(n\in \mathbf{N}).  \label{2.7}
\end{equation}%
For example, if we set $\mu (t)=\frac{1}{(1+t)^{\beta }}(t>0;0\leq \beta \leq
1),$ then for $x\geq 0,$ we find
\begin{eqnarray*}
U(x) &=&\int_{0}^{x}\frac{1}{(1+t)^{\beta }}dt \\
&=&\left\{
\begin{array}{c}
\frac{(1+x)^{1-\beta }-1}{1-\beta },0\leq \beta <1 \\
\ln (1+x),\beta =1%
\end{array}%
\right. <\infty ,
\end{eqnarray*}%
and $$U(\infty )=\int_{0}^{\infty }\frac{1}{(1+t)^{\beta }}dt=\infty .$$
\end{remark}

\begin{lemma}
  If $0\leq \alpha \leq \rho\ (\rho >0),\ 0<\gamma <\sigma \leq
1,$ there exists $n_{0}\in \mathbf{N},$ such that $\nu _{n}\geq \nu _{n+1}$ $%
(n\in \mathbf{\{}n_{0},n_{0}+1,\cdots \}),$ and $V(\infty )=\infty ,$ then,\\
(i) for $x\in \mathbf{R}_{+}\mathbf{,}$ we have
\begin{equation}
k(\sigma )(1-\theta _{\delta }(\sigma ,x))<\omega _{\delta }(\sigma ,x),
\label{2.8}
\end{equation}%
where, $\theta _{\delta }(\sigma ,x)=O((U(x))^{\delta (\sigma -\gamma )})\in
(0,1);$\\ 
(ii) for any $b>0,$ we have%
\begin{equation}
\sum_{n=1}^{\infty }\frac{\nu _{n}}{\widetilde{V}_{n}^{1+b}}=\frac{1}{b}%
\left( \frac{1}{V_{n_{0}}^{b}}+bO(1)\right) .  \label{2.9}
\end{equation}
\end{lemma}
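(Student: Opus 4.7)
The plan for part (i) is to bound $\omega_\delta(\sigma, x)$ from below by converting a tail of the sum into an integral and then absorbing the missing piece near $u = 0$ into $\theta_\delta$. First I would fix $c := U^\delta(x)$ and define $g(v) := \csc h(\rho(cv)^\gamma)\,e^{-\alpha(cv)^\gamma}\,v^{\sigma-1}$, so that $\omega_\delta(\sigma,x) = c^\sigma \sum_{n=1}^\infty g(\widetilde{V}_n)\nu_n$ and the change of variables $u = cv$ gives $c^\sigma \int_0^\infty g(v)\,dv = k(\sigma)$. For $0<\gamma<\sigma\le 1$, the function $g$ is a product of positive, (strictly) decreasing factors, hence itself strictly decreasing on $(0,\infty)$. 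Since $\widetilde{V}_n \le V_n$ we obtain $g(\widetilde{V}_n) \ge g(V_n)$, so $\omega_\delta(\sigma,x) \ge c^\sigma \sum_{n \ge n_0} g(V_n)\nu_n$.

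Next I would use the eventual monotonicity $\nu_n \ge \nu_{n+1}$ for $n \ge n_0$ together with the strict decrease of $g$ on $[V_n, V_{n+1}]$ to write $g(V_n)\nu_n \ge g(V_n)\nu_{n+1} > \int_{V_n}^{V_{n+1}} g(v)\,dv$. Summing over $n \ge n_0$ telescopes the integrals and yields $\sum_{n \ge n_0} g(V_n)\nu_n > \int_{V_{n_0}}^\infty g(v)\,dv$, and the substitution $u = cv$ converts $c^\sigma\int_{V_{n_0}}^\infty g(v)\,dv$ into $k(\sigma) - \int_0^{cV_{n_0}} \csc h(\rho u^\gamma)\,e^{-\alpha u^\gamma}\,u^{\sigma-1}\,du$. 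Setting $\theta_\delta(\sigma, x) := k(\sigma)^{-1}\int_0^{cV_{n_0}} \csc h(\rho u^\gamma)\,e^{-\alpha u^\gamma}\,u^{\sigma-1}\,du$ delivers (\ref{2.8}). The bound $\theta_\delta \in (0,1)$ is immediate (the integrand is positive and the restricted integral is strictly smaller than $k(\sigma)$), while the elementary estimate $\csc h(u) \le 1/u$ (from $\sinh u \ge u$ for $u>0$) shows the integral is at most $(cV_{n_0})^{\sigma-\gamma}/[\rho(\sigma-\gamma)]$, so that $\theta_\delta = O(c^{\sigma-\gamma}) = O((U(x))^{\delta(\sigma-\gamma)})$.

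For part (ii), I would split the sum at $n_0$. The prefix $\sum_{n=1}^{n_0-1} \nu_n/\widetilde{V}_n^{1+b}$ is a finite positive constant, absorbed into the $O(1)$ term. For the upper bound on the tail, use $\widetilde{V}_n \ge V(n)$ and Hermite--Hadamard applied to the convex function $t \mapsto V(t)^{-1-b}$ on each interval $(n-\tfrac12,n+\tfrac12)$: this gives $\nu_n/V(n)^{1+b} \le \int_{V_{n-1}}^{V_n} v^{-1-b}\,dv$, and summing for $n \ge n_0+1$ produces $\sum_{n \ge n_0+1} \nu_n/\widetilde{V}_n^{1+b} \le 1/(bV_{n_0}^b)$. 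For the matching lower bound, use $\widetilde{V}_n \le V_n$ with $\nu_n \ge \nu_{n+1}$ and the monotonicity of $v^{-1-b}$ to deduce $\nu_n/V_n^{1+b} \ge \nu_{n+1}/V_n^{1+b} \ge \int_{V_n}^{V_{n+1}} v^{-1-b}\,dv$; summing over $n \ge n_0$ yields $\sum_{n \ge n_0} \nu_n/\widetilde{V}_n^{1+b} \ge 1/(bV_{n_0}^b)$. Combining gives $\sum_{n=1}^\infty \nu_n/\widetilde{V}_n^{1+b} = 1/(bV_{n_0}^b) + O(1)$, which rearranges to (\ref{2.9}).

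The main technical obstacle is in (i): coupling the displacement $\widetilde{V}_n$ vs.\ $V_n$ with the weight mismatch $\nu_n$ vs.\ $\nu_{n+1}$ cleanly enough to realize the tail of the sum as a genuine lower Riemann-sum bound for $\int_{V_{n_0}}^\infty g$, and then identifying the remainder $\theta_\delta$ as an integral over $(0, cV_{n_0})$ so that the bound $\csc h(u) \le 1/u$ produces the sharp decay exponent $\sigma - \gamma$. Once that Riemann-sum comparison is set up, the change of variables $u = cv$ and a single elementary estimate on $\csc h$ close the argument.
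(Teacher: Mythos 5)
Your proposal is correct and follows essentially the same route as the paper: drop the finite prefix, use $\widetilde{V}_{n}\leq V_{n}$ (resp. $\widetilde{V}_{n}\geq V(n)$) together with the eventual monotonicity $\nu _{n}\geq \nu _{n+1}$ to dominate the tail by a telescoping integral comparison over the partition $\{V_{n}\}$, substitute $u=U^{\delta }(x)v$ to identify $k(\sigma )$ minus the integral over $(0,U^{\delta }(x)V_{n_{0}})$, and use Hermite--Hadamard for the upper bound in (ii). The only (harmless) deviations are cosmetic: you work in the variable $v=V(t)$ rather than $t$ itself, and you bound the integrand via $\csc \mathrm{h}(u)\leq 1/u$ instead of the paper's boundedness argument for $F(u)=u^{\gamma }\csc \mathrm{h}(\rho u^{\gamma })e^{-\alpha u^{\gamma }}$; both yield the same exponent $\sigma -\gamma $.
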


\begin{proof}
 Since $v_{n}\geq v_{n+1}(n\geq n_{0}),$ and
$$
\widetilde{V}_{n}=V_{n}-\widetilde{\nu }_{n}\leq V_{n}=\int_{\frac{1}{2}}^{n+%
\frac{1}{2}}\nu (t)dt=V(n+\frac{1}{2}),
$$
by Example 1(iii), we have%
\begin{eqnarray*}
\omega _{\delta }(\sigma ,x) &=&\sum_{n=1}^{\infty }\frac{\csc h(\rho
(U^{\delta }(x)\widetilde{V}_{n})^{\gamma })}{e^{\alpha (U^{\delta }(x)%
\widetilde{V}_{n})^{\gamma }}}\frac{U^{\delta \sigma }(x)\nu _{n}}{%
\widetilde{V}_{n}^{1-\sigma }} \\
&\geq &\sum_{n=n_{0}}^{\infty }\int_{n+\frac{1}{2}}^{n+\frac{3}{2}}\frac{%
\csc h(\rho (U^{\delta }(x)V(n+\frac{1}{2}))^{\gamma })}{e^{\alpha
(U^{\delta }(x)V(n+\frac{1}{2}))^{\gamma }}}\frac{U^{\delta \sigma }(x)\nu
_{n+1}dt}{(V(n+\frac{1}{2}))^{1-\sigma }} \\
&>&\sum_{n=n_{0}}^{\infty }\int_{n+\frac{1}{2}}^{n+\frac{3}{2}}\frac{\csc
h(\rho (U^{\delta }(x)V(t))^{\gamma })}{e^{\alpha (U^{\delta
}(x)V(t))^{\gamma }}}\frac{U^{\delta \sigma }(x)V^{\prime }(t)}{%
(V(t))^{1-\sigma }}dt \\
&=&\int_{n_{0}+\frac{1}{2}}^{\infty }\frac{\csc h(\rho (U^{\delta
}(x)V(t))^{\gamma })}{e^{\alpha (U^{\delta }(x)V(t))^{\gamma }}}\frac{%
U^{\delta \sigma }(x)V^{\prime }(t)}{(V(t))^{1-\sigma }}dt.
\end{eqnarray*}%
Setting $u=U^{\delta }(x)V(t),$ in view of $V(\infty )=\infty ,$ by (\ref%
{2.1}), we find%
\begin{eqnarray*}
\omega _{\delta }(\sigma ,x) &>&\int_{U^{\delta }(x)V_{n_{0}}}^{\infty }%
\frac{\csc h(\rho u^{\gamma })}{e^{\alpha u^{\gamma }}}u^{\sigma -1}du \\
&=&k(\sigma )-\int_{0}^{U^{\delta }(x)V_{n_{0}}}\frac{\csc h(\rho u^{\gamma
})}{e^{\alpha u^{\gamma }}}u^{\sigma -1}du \\
&=&k(\sigma )(1-\theta _{\delta }(\sigma ,x)), \\
\theta _{\delta }(\sigma ,x) &:&=\frac{1}{k(\sigma )}\int_{0}^{U^{\delta
}(x)V_{n_{0}}}\frac{\csc h(\rho u^{\gamma })}{e^{\alpha u^{\gamma }}}%
u^{\sigma -1}du\in (0,1).
\end{eqnarray*}

Since $F(u)=\frac{u^{\gamma }\csc h(\rho u^{\gamma })}{e^{\alpha u^{\gamma }}%
}$ is continuous in $(0,\infty ),$ satisfying $$F(u)\rightarrow \frac{1}{\rho }%
(u\rightarrow 0^{+}),F(u)\rightarrow 0(u\rightarrow \infty ),$$ there exists
a constant $L>0,$ such that $F(u)\leq L,$ namely,
$$
\frac{\csc h(\rho u^{\gamma })}{e^{\alpha u^{\gamma }}}\leq Lu^{-\gamma
}(u\in (0,\infty )).
$$
Hence we find%
\begin{eqnarray*}
0 &<&\theta _{\delta }(\sigma ,x)\leq \frac{L}{k(\sigma )}%
\int_{0}^{U^{\delta }(x)V_{n_{0}}}u^{\sigma -\gamma -1}du \\
&=&\frac{L(U^{\delta }(x)V_{n_{0}})^{\sigma -\gamma }}{k(\sigma )(\sigma
-\gamma )},
\end{eqnarray*}%
and then (\ref{2.8}) follows.

For $b>0,$ we find%
\begin{eqnarray*}
\sum_{n=1}^{\infty }\frac{\nu _{n}}{\widetilde{V}_{n}^{1+b}} &\leq
&\sum_{n=1}^{n_{0}}\frac{\nu _{n}}{\widetilde{V}_{n}^{1+b}}%
+\sum_{n=n_{0}+1}^{\infty }\frac{\nu _{n}}{V^{1+b}(n)} \\
&<&\sum_{n=1}^{n_{0}}\frac{\nu _{n}}{\widetilde{V}_{n}^{1+b}}%
+\sum_{n=n_{0}+1}^{\infty }\int_{n-\frac{1}{2}}^{n+\frac{1}{2}}\frac{%
V^{\prime }(x)}{V^{1+b}(x)}dx \\
&=&\sum_{n=1}^{n_{0}}\frac{\nu _{n}}{\widetilde{V}_{n}^{1+b}}+\int_{n_{0}+%
\frac{1}{2}}^{\infty }\frac{dV(x)}{V^{1+b}(x)} \\
&=&\sum_{n=1}^{n_{0}}\frac{\nu _{n}}{\widetilde{V}_{n}^{1+b}}+\frac{1}{%
bV^{b}(n_{0}+\frac{1}{2})} \\
&=&\frac{1}{b}\left( \frac{1}{V_{n_{0}}^{b}}+b\sum_{n=1}^{n_{0}}\frac{\nu
_{n}}{\widetilde{V}_{n}^{1+b}}\right) ,
\end{eqnarray*}%
\begin{eqnarray*}
\sum_{n=1}^{\infty }\frac{\nu _{n}}{\widetilde{V}_{n}^{1+b}} &\geq
&\sum_{n=n_{0}}^{\infty }\int_{n+\frac{1}{2}}^{n+\frac{3}{2}}\frac{\nu _{n+1}%
}{V^{1+b}(n+\frac{1}{2})}dx \\
&>&\sum_{n=n_{0}}^{\infty }\int_{n+\frac{1}{2}}^{n+\frac{3}{2}}\frac{%
V^{\prime }(x)}{V^{1+b}(x)}dx=\int_{n_{0}+\frac{1}{2}}^{\infty }\frac{dV(x)}{%
V^{1+b}(x)} \\
&=&\frac{1}{bV^{b}(n_{0}+\frac{1}{2})}=\frac{1}{bV_{n_{0}}^{b}}.
\end{eqnarray*}%
Hence we have (\ref{2.9}).  The lemma is proved.
\end{proof}

\textbf{Note}. For example, $\nu _{n}=\frac{1}{(n-\tau)^{\beta }}%
(n\in \mathbf{N};0\leq \beta \leq 1,0\leq\tau<1)$ satisfies the conditions of Lemma 3
(for $n_{0}\geq 1)$.

\section{Equivalent Inequalities and Operator Expressions}

\begin{theorem}
 If $0\leq \alpha \leq \rho (\rho >0),0<\gamma
<\sigma \leq 1,k(\sigma )$ is given by (\ref{2.1}), then for $p>1,$ $%
0<||f||_{p,\Phi _{\delta }},||a||_{q,\widetilde{\Psi }}<\infty ,$ we have
the following equivalent inequalities:%
\begin{eqnarray}
I &:&=\sum_{n=1}^{\infty }\int_{0}^{\infty }\frac{\csc h(\rho (U^{\delta }(x)%
\widetilde{V}_{n})^{\gamma })}{e^{\alpha (U^{\delta }(x)\widetilde{V}%
_{n})^{\gamma }}}a_{n}f(x)dx<k(\sigma )||f||_{p,\Phi _{\delta }}||a||_{q,%
\widetilde{\Psi }},  \label{3.1} \\
J_{1} &:&=\sum_{n=1}^{\infty }\frac{\nu _{n}}{\widetilde{V}_{n}^{1-p\sigma }}%
\left[ \int_{0}^{\infty }\frac{\csc h(\rho (U^{\delta }(x)\widetilde{V}%
_{n})^{\gamma })}{e^{\alpha (U^{\delta }(x)\widetilde{V}_{n})^{\gamma }}}%
f(x)dx\right] ^{p}  \nonumber \\
&<&k(\sigma )||f||_{p,\Phi _{\delta }},  \label{3.2} \\
J_{2} &:&=\left\{ \int_{0}^{\infty }\frac{\mu (x)}{U^{1-q\delta \sigma }(x)}%
\left[ \sum_{n=1}^{\infty }\frac{\csc h(\rho (U^{\delta }(x)\widetilde{V}%
_{n})^{\gamma })}{e^{\alpha (U^{\delta }(x)\widetilde{V}_{n})^{\gamma }}}%
a_{n}\right] ^{q}dx\right\} ^{\frac{1}{q}}  \nonumber \\
&<&k(\sigma )||a||_{q,\widetilde{\Psi }}.  \label{3.3}
\end{eqnarray}
\end{theorem}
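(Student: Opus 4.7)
The plan is to derive $(3.2)$ first by a direct Hölder-plus-weight-function argument, and then establish the equivalences $(3.2) \Leftrightarrow (3.1)$ and $(3.1) \Leftrightarrow (3.3)$ by the standard dualization used throughout the theory of Hardy-Hilbert-type inequalities. Abbreviate $h_{\delta,n}(x) := \csc h(\rho(U^\delta(x)\widetilde V_n)^\gamma)/e^{\alpha(U^\delta(x)\widetilde V_n)^\gamma}$. I would split $h_{\delta,n}(x)f(x)$ as a product of two factors with a weight $w_1(x,n) = \bigl(\widetilde V_n^\sigma\mu(x)/U^{1-\delta\sigma}(x)\bigr)^{1/q}$ inserted in one and its reciprocal in the other, so arranged that Hölder's inequality with conjugate exponents $p,q$ turns the $q$-factor into precisely $\varpi_\delta(\sigma,n)$. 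Explicitly,
\[
\left[\int_0^\infty h_{\delta,n}(x)f(x)\,dx\right]^p \leq \varpi_\delta(\sigma,n)^{p-1}\int_0^\infty h_{\delta,n}(x)\left(\frac{U^{1-\delta\sigma}(x)}{\widetilde V_n^{\sigma}\mu(x)}\right)^{p-1}f^p(x)\,dx.
\]
Multiplying by $\nu_n/\widetilde V_n^{1-p\sigma}$, summing over $n$, exchanging sum and integral by Fubini, and applying $\varpi_\delta(\sigma,n)\leq k(\sigma)$ from $(2.6)$ gives $J_1 \leq k(\sigma)^{p-1}\int_0^\infty \omega_\delta(\sigma,x)\Phi_\delta(x)f^p(x)\,dx$, since a routine simplification using $1-p\sigma+\sigma(p-1)=1-\sigma$ collapses the inner $n$-sum to $\omega_\delta(\sigma,x)\Phi_\delta(x)$. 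Finally, $\omega_\delta(\sigma,x) < k(\sigma)$ from $(2.5)$ together with $0<\|f\|_{p,\Phi_\delta}<\infty$ yields the strict form $J_1 < k(\sigma)^p\|f\|_{p,\Phi_\delta}^p$, which is the intended content of $(3.2)$.

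For $(3.2)\Rightarrow (3.1)$ I would apply Hölder's inequality to the outer sum, writing
\[
I = \sum_{n=1}^\infty \left[\left(\frac{\nu_n}{\widetilde V_n^{1-p\sigma}}\right)^{1/p}\int_0^\infty h_{\delta,n}(x)f(x)\,dx\right]\cdot\left[\left(\frac{\widetilde V_n^{1-p\sigma}}{\nu_n}\right)^{1/p}a_n\right],
\]
which immediately produces $I \leq J_1^{1/p}\|a\|_{q,\widetilde\Psi}$. Conversely, for $(3.1)\Rightarrow (3.2)$, assuming $0 < J_1 < \infty$, one substitutes the test sequence
\[
a_n := \frac{\nu_n}{\widetilde V_n^{1-p\sigma}}\left[\int_0^\infty h_{\delta,n}(x)f(x)\,dx\right]^{p-1},
\]
verifies $\|a\|_{q,\widetilde\Psi}^q = J_1$, and after cancellation in $(3.1)$ extracts $J_1^{1/p} < k(\sigma)\|f\|_{p,\Phi_\delta}$; the boundary cases $J_1\in\{0,\infty\}$ are handled by truncation at $n=N$ and monotone convergence. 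The equivalence $(3.1)\Leftrightarrow (3.3)$ is perfectly symmetric, via Hölder on the $x$-integral and the test function $f(x) := \mu(x)U^{q\delta\sigma-1}(x)\bigl[\sum_n h_{\delta,n}(x)a_n\bigr]^{q-1}$ for the reverse implication.

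The main technical obstacle is preserving the \emph{strict} inequality throughout the chain. Hölder's inequality, the bound $\varpi_\delta(\sigma,n)\leq k(\sigma)$ from $(2.6)$, and the Fubini interchange all produce only $\leq$, so the strict $<$ in $(3.1)$--$(3.3)$ must be sourced from the strict pointwise estimate $(2.5)$. The clean way to arrange this is to save the application of $(2.5)$ for the last step, so that the terminal estimate reads
\[
\int_0^\infty \omega_\delta(\sigma,x)\Phi_\delta(x)f^p(x)\,dx < k(\sigma)\|f\|_{p,\Phi_\delta}^p,
\]
which is strict thanks to $f\not\equiv 0$ together with $\omega_\delta(\sigma,x) < k(\sigma)$ pointwise. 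A secondary technical point is the need for a truncation-and-monotone-convergence argument to justify substituting the test sequence or test function in the reverse equivalences when $J_1$ or $J_2$ is a priori infinite; this is standard but must be handled explicitly to avoid vacuous manipulations.
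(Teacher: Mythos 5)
Your proposal is correct and follows essentially the same route as the paper: Hölder's inequality with the weight arranged so that the conjugate factor produces $\varpi _{\delta }(\sigma ,n)$, then (\ref{2.6}), term-by-term integration, the collapse of the inner sum to $\omega _{\delta }(\sigma ,x)\Phi _{\delta }(x)$, the strict bound from (\ref{2.5}) applied last, and the standard test-sequence/test-function substitutions for the reverse implications (your decomposition is algebraically identical to the paper's (\ref{3.4}), just written with the exponents pre-combined). The only cosmetic deviations are your reading of $J_{1}$ without the outer $1/p$ power (matching the theorem's literal statement rather than the proof's usage) and your truncation argument for the case $J_{1}=\infty $, which is if anything more careful than the paper's brief dismissal of that case.
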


\begin{proof}
By H\"{o}lder's inequality with weight (cf. \cite{K1}), we have%
\begin{eqnarray}
&&\left[ \int_{0}^{\infty }\frac{\csc h(\rho (U^{\delta }(x)\widetilde{V}%
_{n})^{\gamma })}{e^{\alpha (U^{\delta }(x)\widetilde{V}_{n})^{\gamma }}}%
f(x)dx\right] ^{p}  \nonumber \\
&=&\left[ \int_{0}^{\infty }\frac{\csc h(\rho (U^{\delta }(x)\widetilde{V}%
_{n})^{\gamma })}{e^{\alpha (U^{\delta }(x)\widetilde{V}_{n})^{\gamma }}}%
\left( \frac{U^{\frac{1-\delta \sigma }{q}}(x)f(x)}{\widetilde{V}_{n}^{\frac{%
1-\sigma }{p}}\mu ^{\frac{1}{q}}(x)}\right) \left( \frac{\widetilde{V}_{n}^{%
\frac{1-\sigma }{p}}\mu ^{\frac{1}{q}}(x)}{U^{\frac{1-\delta \sigma }{q}}(x)}%
\right) dx\right] ^{p}  \nonumber \\
&\leq &\int_{0}^{\infty }\frac{\csc h(\rho (U^{\delta }(x)\widetilde{V}%
_{n})^{\gamma })}{e^{\alpha (U^{\delta }(x)\widetilde{V}_{n})^{\gamma }}}%
\left( \frac{U^{\frac{p(1-\delta \sigma )}{q}}(x)f^{p}(x)}{\widetilde{V}%
_{n}^{1-\sigma }\mu ^{\frac{p}{q}}(x)}\right) dx  \nonumber \\
&&\times \left[ \int_{0}^{\infty }\frac{\csc h(\rho (U^{\delta }(x)%
\widetilde{V}_{n})^{\gamma })}{e^{\alpha (U^{\delta }(x)\widetilde{V}%
_{n})^{\gamma }}}\frac{\widetilde{V}_{n}^{(1-\sigma )(p-1)}\mu (x)}{%
U^{1-\delta \sigma }(x)}dx\right] ^{p-1}  \nonumber \\
&=&\frac{(\varpi _{\delta }(\sigma ,n))^{p-1}}{\widetilde{V}_{n}^{p\sigma
-1}\nu _{n}}\int_{0}^{\infty }\frac{\csc h(\rho (U^{\delta }(x)\widetilde{V}%
_{n})^{\gamma })}{e^{\alpha (U^{\delta }(x)\widetilde{V}_{n})^{\gamma }}}%
\frac{U^{(1-\delta \sigma )(p-1)}(x)\nu _{n}}{\widetilde{V}_{n}^{1-\sigma
}\mu ^{p-1}(x)}f^{p}(x)dx.  \label{3.4}
\end{eqnarray}

In view of (\ref{2.6}) and the Lebesgue term by term integration theorem (cf.
\cite{K2}), we find%
\begin{eqnarray}
J_{1} &\leq &(k(\sigma ))^{\frac{1}{q}}\left[ \sum_{n=1}^{\infty
}\int_{0}^{\infty }\frac{\csc h(\rho (U^{\delta }(x)\widetilde{V}%
_{n})^{\gamma })}{e^{\alpha (U^{\delta }(x)\widetilde{V}_{n})^{\gamma }}}%
\frac{U^{(1-\delta \sigma )(p-1)}(x)\nu _{n}}{\widetilde{V}_{n}^{1-\sigma
}\mu ^{p-1}(x)}f^{p}(x)dx\right] ^{\frac{1}{p}}  \nonumber \\
&=&(k(\sigma ))^{\frac{1}{q}}\left[ \int_{0}^{\infty }\sum_{n=1}^{\infty }%
\frac{\csc h(\rho (U^{\delta }(x)\widetilde{V}_{n})^{\gamma })}{e^{\alpha
(U^{\delta }(x)\widetilde{V}_{n})^{\gamma }}}\frac{U^{(1-\delta \sigma
)(p-1)}(x)\nu _{n}}{\widetilde{V}_{n}^{1-\sigma }\mu ^{p-1}(x)}f^{p}(x)dx%
\right] ^{\frac{1}{p}}  \nonumber \\
&=&(k(\sigma ))^{\frac{1}{q}}\left[ \int_{0}^{\infty }\omega _{\delta
}(\sigma ,x)\frac{U^{p(1-\delta \sigma )-1}(x)}{\mu ^{p-1}(x)}f^{p}(x)dx%
\right] ^{\frac{1}{p}}.  \label{3.5}
\end{eqnarray}%
Then by (\ref{2.5}), we have (\ref{3.2}).

By H\"{o}lder's inequality (cf. \cite{K1}), we have%
\begin{eqnarray}
I &=&\sum_{n=1}^{\infty }\left[ \frac{\nu _{n}^{\frac{1}{p}}}{\widetilde{V}%
_{n}^{\frac{1}{p}-\sigma }}\int_{0}^{\infty }\frac{\csc h(\rho (U^{\delta
}(x)\widetilde{V}_{n})^{\gamma })}{e^{\alpha (U^{\delta }(x)\widetilde{V}%
_{n})^{\gamma }}}f(x)dx\right] \left( \frac{\widetilde{V}_{n}^{\frac{1}{p}%
-\sigma }a_{n}}{\nu _{n}^{\frac{1}{p}}}\right)  \nonumber \\
&\leq &J_{1}||a||_{q,\widetilde{\Psi }}.  \label{3.6}
\end{eqnarray}%
Then by (\ref{3.2}), we have (\ref{3.1}).

On the other hand, assuming that (\ref{3.1}) is valid, we set%
$$
a_{n}:=\frac{\nu _{n}}{\widetilde{V}_{n}^{1-p\sigma }}\left[
\int_{0}^{\infty }\frac{\csc h(\rho (U^{\delta }(x)\widetilde{V}%
_{n})^{\gamma })}{e^{\alpha (U^{\delta }(x)\widetilde{V}_{n})^{\gamma }}}%
f(x)dx\right] ^{p-1},n\in \mathbf{N}.
$$
Then, we find $J_{1}^{p}=||a||_{q,\widetilde{\Psi }}^{q}.$\\ 
If $J_{1}=0,$ then
(\ref{3.2}) is trivially valid.\\ 
If $J_{1}=\infty ,$ then (\ref{3.2}) keeps
impossible.\\ 
Suppose that $0<J_{1}<\infty .$ By (\ref{3.1}), it follows that%
\begin{eqnarray*}
||a||_{q,\widetilde{\Psi }}^{q} &=&J_{1}^{p}=I<k(\sigma )||f||_{p,\Phi
_{\delta }}||a||_{q,\widetilde{\Psi }}, \\
||a||_{q,\widetilde{\Psi }}^{q-1} &=&J_{1}<k(\sigma )||f||_{p,\Phi _{\delta
}},
\end{eqnarray*}%
and then (\ref{3.2}) follows, which is equivalent to (\ref{3.1}).

By H\"{o}lder's inequality with weight (cf. \cite{K1}), we obtain%
\begin{eqnarray}
&&\left[ \sum_{n=1}^{\infty }\frac{\csc h(\rho (U^{\delta }(x)\widetilde{V}%
_{n})^{\gamma })}{e^{\alpha (U^{\delta }(x)\widetilde{V}_{n})^{\gamma }}}%
a_{n}\right] ^{q}  \nonumber \\
&=&\left[ \sum_{n=1}^{\infty }\frac{\csc h(\rho (U^{\delta }(x)\widetilde{V}%
_{n})^{\gamma })}{e^{\alpha (U^{\delta }(x)\widetilde{V}_{n})^{\gamma }}}%
\left( \frac{U^{\frac{1-\delta \sigma }{q}}(x)\nu _{n}^{\frac{1}{p}}}{%
\widetilde{V}_{n}^{\frac{1-\sigma }{p}}}\right) \left( \frac{\widetilde{V}%
_{n}^{\frac{1-\sigma }{p}}a_{n}}{U^{\frac{1-\delta \sigma }{q}}(x)\nu _{n}^{%
\frac{1}{p}}}\right) \right] ^{q}  \nonumber \\
&\leq &\left[ \sum_{n=1}^{\infty }\frac{\csc h(\rho (U^{\delta }(x)%
\widetilde{V}_{n})^{\gamma })}{e^{\alpha (U^{\delta }(x)\widetilde{V}%
_{n})^{\gamma }}}\frac{U^{(1-\delta \sigma )(p-1)}(x)\nu _{n}}{\widetilde{V}%
_{n}^{1-\sigma }}\right] ^{q-1}  \nonumber \\
&&\times \sum_{n=1}^{\infty }\frac{\csc h(\rho (U^{\delta }(x)\widetilde{V}%
_{n})^{\gamma })}{e^{\alpha (U^{\delta }(x)\widetilde{V}_{n})^{\gamma }}}%
\frac{\widetilde{V}_{n}^{\frac{q(1-\sigma )}{p}}}{U^{1-\delta \sigma }(x)\nu
_{n}^{q-1}}a_{n}^{q}  \nonumber \\
&=&\frac{(\omega _{\delta }(\sigma ,x))^{q-1}}{U^{q\delta \sigma -1}(x)\mu
(x)}\sum_{n=1}^{\infty }\frac{\csc h(\rho (U^{\delta }(x)\widetilde{V}%
_{n})^{\gamma })}{e^{\alpha (U^{\delta }(x)\widetilde{V}_{n})^{\gamma }}}%
\frac{\widetilde{V}_{n}^{(1-\sigma )(q-1)}\mu (x)}{U^{1-\delta \sigma
}(x)\nu _{n}^{q-1}}a_{n}^{q}.  \label{3.7}
\end{eqnarray}%
Then by (\ref{2.5}) and Lebesgue term by term integration theorem (cf. \cite%
{K2}), it follows that%
\begin{eqnarray}
J_{2} &<&(k(\sigma ))^{\frac{1}{p}}\left\{ \int_{0}^{\infty
}\sum_{n=1}^{\infty }\frac{\csc h(\rho (U^{\delta }(x)\widetilde{V}%
_{n})^{\gamma })}{e^{\alpha (U^{\delta }(x)\widetilde{V}_{n})^{\gamma }}}%
\frac{\widetilde{V}_{n}^{(1-\sigma )(q-1)}\mu (x)}{U^{1-\delta \sigma
}(x)\nu _{n}^{q-1}}a_{n}^{q}dx\right\} ^{\frac{1}{q}}  \nonumber \\
&=&(k(\sigma ))^{\frac{1}{p}}\left\{ \sum_{n=1}^{\infty }\int_{0}^{\infty }%
\frac{\csc h(\rho (U^{\delta }(x)\widetilde{V}_{n})^{\gamma })}{e^{\alpha
(U^{\delta }(x)\widetilde{V}_{n})^{\gamma }}}\frac{\widetilde{V}%
_{n}^{(1-\sigma )(q-1)}\mu (x)}{U^{1-\delta \sigma }(x)\nu _{n}^{q-1}}%
a_{n}^{q}dx\right\} ^{\frac{1}{q}}  \nonumber \\
&=&(k(\sigma ))^{\frac{1}{p}}\left\{ \sum_{n=1}^{\infty }\varpi _{\delta
}(\sigma ,n)\frac{\widetilde{V}_{n}^{q(1-\sigma )-1}}{\nu _{n}^{q-1}}%
a_{n}^{q}\right\} ^{\frac{1}{q}}.  \label{3.8}
\end{eqnarray}%
Then by (\ref{2.6}), \ we have (\ref{3.3}).

By H\"{o}lder's inequality (cf. \cite{K1}), we have%
\begin{eqnarray}
I &=&\int_{0}^{\infty }\left( \frac{U^{\frac{1}{q}-\delta \sigma }(x)}{\mu ^{%
\frac{1}{q}}(x)}f(x)\right) \left[ \frac{\mu ^{\frac{1}{q}}(x)}{U^{\frac{1}{q%
}-\delta \sigma }(x)}\sum_{n=1}^{\infty }\frac{\csc h(\rho (U^{\delta }(x)%
\widetilde{V}_{n})^{\gamma })}{e^{\alpha (U^{\delta }(x)\widetilde{V}%
_{n})^{\gamma }}}a_{n}\right] dx  \nonumber \\
&\leq &||f||_{p,\Phi _{\delta }}J_{2}.  \label{3.9}
\end{eqnarray}%
Then by (\ref{3.3}), we have (\ref{3.1}).

On the other hand, assuming that (\ref{3.3}) is valid, we set%
$$
f(x):=\frac{\mu (x)}{U^{1-q\delta \sigma }(x)}\left[ \sum_{n=1}^{\infty }%
\frac{\csc h(\rho (U^{\delta }(x)\widetilde{V}_{n})^{\gamma })}{e^{\alpha
(U^{\delta }(x)\widetilde{V}_{n})^{\gamma }}}a_{n}\right] ^{q-1},\ x\in
\mathbf{R}_{+}.
$$
Then we find $J_{2}^{q}=||f||_{p,\Phi _{\delta }}^{p}.$\\ 
If $J_{2}=0,$ then (%
\ref{3.3}) is trivially valid.\\ 
If $J_{2}=\infty ,$ then (\ref{3.3}) keeps
impossible. \\
Suppose that $0<J_{2}<\infty .$ By (\ref{3.1}), it follows that%
\begin{eqnarray*}
||f||_{p,\Phi _{\delta }}^{p} &=&J_{2}^{q}=I<k(\sigma )||f||_{p,\Phi
_{\delta }}||a||_{q,\widetilde{\Psi }}, \\
||f||_{p,\Phi _{\delta }}^{p-1} &=&J_{2}<k(\sigma )||a||_{q,\widetilde{\Psi }%
},
\end{eqnarray*}%
and then (\ref{3.3}) follows, which is equivalent to (\ref{3.1}).

Therefore, (\ref{3.1}), (\ref{3.2}) and (\ref{3.3}) are equivalent.  The theorem is proved.
\end{proof}

\begin{theorem}
 With the assumptions of Theorem 1, if there exists $%
n_{0}\in \mathbf{N},$ such that $v_{n}\geq v_{n+1}$ $(n\in
\mathbf{\{}n_{0},n_{0}+1,\cdots \}),$ and $U(\infty )=V(\infty )=\infty ,$
then the constant factor $k(\sigma )$ in (\ref{3.1}), (\ref{3.2}) and (\ref%
{3.3}) is the best possible.
\end{theorem}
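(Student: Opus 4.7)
The plan is to prove Theorem 2 by contradiction. Suppose there exists a constant $0 < K < k(\sigma)$ that remains valid in place of $k(\sigma)$ in (3.1). I will exhibit, for small $\varepsilon > 0$, a family of test pairs $(\widetilde{f}, \widetilde{a})$ for which (3.1) forces $K \geq k(\sigma)$ in the limit $\varepsilon \to 0^+$, contradicting the choice of $K$. Because (3.1), (3.2) and (3.3) are equivalent by Theorem 1, the best-possibility will transfer to (3.2) and (3.3) as well.

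For $\varepsilon > 0$ small enough that $\sigma - \varepsilon/q > \gamma$, set
\[
\widetilde{f}(x) := \mu(x)\, U^{\delta(\sigma + \varepsilon/p) - 1}(x)\, \mathbf{1}_{\{U^\delta(x) \le 1\}}, \qquad \widetilde{a}_n := \nu_n\, \widetilde{V}_n^{\sigma - \varepsilon/q - 1}.
\]
A direct computation of $\Phi_\delta(x)\widetilde{f}^p(x)$ yields $\mu(x) U^{\delta\varepsilon - 1}(x)$ on the prescribed domain, so the substitution $u = U(x)$ gives $\|\widetilde{f}\|_{p,\Phi_\delta}^p = 1/\varepsilon$ in both cases $\delta = \pm 1$. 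Simultaneously, (2.9) with $b = \varepsilon$ produces $\|\widetilde{a}\|_{q,\widetilde{\Psi}}^q = \frac{1}{\varepsilon}(V_{n_0}^{-\varepsilon} + \varepsilon O(1))$, hence $\|\widetilde{f}\|_{p,\Phi_\delta}\|\widetilde{a}\|_{q,\widetilde{\Psi}} = \frac{1}{\varepsilon}(1 + o(1))$ as $\varepsilon \to 0^+$. For the left-hand side $\widetilde{I}$, I would interchange summation and integration (Lebesgue), identify the inner $n$-sum with $U^{-\delta(\sigma - \varepsilon/q)}(x)\,\omega_\delta(\sigma - \varepsilon/q, x)$, and then apply Lemma 3 at the shifted parameter $\sigma - \varepsilon/q$ to get the lower bound $\omega_\delta(\sigma - \varepsilon/q, x) > k(\sigma - \varepsilon/q)\bigl(1 - \theta_\delta(\sigma - \varepsilon/q, x)\bigr)$. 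The combined $U$-power collapses to $U^{\delta\varepsilon - 1}$ (by $1/p + 1/q = 1$), so the main term integrates to $k(\sigma - \varepsilon/q)/\varepsilon$, while the $\theta_\delta$ correction integrates to $O(1)$ because $\theta_\delta = O(U^{\delta(\sigma - \varepsilon/q - \gamma)})$ combined with the integration domain produces a convergent integral (exploiting $\sigma > \gamma$). Hence
\[
\widetilde{I} > k(\sigma - \varepsilon/q)\Bigl(\frac{1}{\varepsilon} - O(1)\Bigr).
\]

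Combining the hypothetical bound $\widetilde{I} < K\|\widetilde{f}\|_{p,\Phi_\delta}\|\widetilde{a}\|_{q,\widetilde{\Psi}}$ with the two estimates above, multiplying through by $\varepsilon$, and sending $\varepsilon \to 0^+$, the continuity of $\tau \mapsto k(\tau)$ at $\sigma$ (visible from (2.1) via continuity of $\Gamma$ and $\zeta(\,\cdot\,,a)$) forces $k(\sigma) \leq K$, contradicting $K < k(\sigma)$. I expect the main obstacle to lie in the bookkeeping of the $\theta_\delta$ error: the lower bound (2.8) must be invoked at the \emph{shifted} parameter $\sigma - \varepsilon/q$ rather than $\sigma$, and then the resulting $\theta_\delta$-integral has to be shown $O(1)$ uniformly as $\varepsilon \to 0^+$. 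This uniformity is precisely what forces the strict gap $\gamma < \sigma$ into play, so the standing hypothesis $0 < \gamma < \sigma \leq 1$ is used essentially, not merely for the convergence of $k(\sigma)$.
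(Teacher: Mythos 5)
Your proposal is correct and essentially reproduces the paper's own argument: the same test pair $\widetilde{f},\widetilde{a}$ (the paper's exponent $\delta(\widetilde{\sigma}+\varepsilon)-1$ with $\widetilde{\sigma}=\sigma-\frac{\varepsilon}{q}$ is exactly your $\delta(\sigma+\frac{\varepsilon}{p})-1$), the same reduction of $\widetilde{I}$ to $\omega_{\delta}(\widetilde{\sigma},x)$ bounded below by (\ref{2.8}) at the shifted parameter with the $\theta_{\delta}$-error integrating to $O(1)$ thanks to $\sigma-\gamma>0$, the same norm computation via (\ref{2.9}) and (\ref{3.12}), and the same passage $\varepsilon\rightarrow 0^{+}$. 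The transfer of best-possibility to (\ref{3.2}) and (\ref{3.3}) is carried out in the paper through the intermediate inequalities (\ref{3.6}) and (\ref{3.9}), which is the precise mechanism your appeal to equivalence implicitly relies on.
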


\begin{proof}
For $\varepsilon \in (0,q(\sigma -\gamma )),$ we set $
\widetilde{\sigma }=\sigma -\frac{\varepsilon }{q}(\in (\gamma ,1)),$ and $%
\widetilde{f}=\widetilde{f}(x),x\in \mathbf{R}_{+},\widetilde{a}=\{%
\widetilde{a}_{n}\}_{n=1}^{\infty },$%
\begin{eqnarray}
\widetilde{f}(x) &=&\left\{
\begin{array}{c}
U^{\delta (\widetilde{\sigma }+\varepsilon )-1}(x)\mu (x),0<x^{\delta }\leq 1
\\
0,x^{\delta }>0%
\end{array}%
\right. ,  \label{3.10} \\
\widetilde{a}_{n} &=&\widetilde{V}_{n}^{\widetilde{\sigma }-1}\nu _{n}=%
\widetilde{V}_{n}^{\sigma -\frac{\varepsilon }{q}-1}\nu _{n},n\in \mathbf{N}.
\label{3.11}
\end{eqnarray}%
Then for $\delta =\pm 1,$ since $U(\infty )=\infty ,$ we find%
\begin{equation}
\int_{\{x>0;0<x^{\delta }\leq 1\}}\frac{\mu (x)}{U^{1-\delta \varepsilon }(x)%
}dx=\frac{1}{\varepsilon }U^{\delta \varepsilon }(1).  \label{3.12}
\end{equation}%
By (\ref{2.9}), (\ref{3.12}) and (\ref{2.8}), we obtain%
\begin{eqnarray}
||\widetilde{f}||_{p,\Phi _{\delta }}||\widetilde{a}||_{q,\widetilde{\Psi }}
&=&\left( \int_{\{x>0;0<x^{\delta }\leq 1\}}\frac{\mu (x)dx}{U^{1-\delta
\varepsilon }(x)}\right) ^{\frac{1}{p}}\left( \sum_{n=1}^{\infty }\frac{\nu
_{n}}{\widetilde{V}_{n}^{1+\varepsilon }}\right) ^{\frac{1}{q}}  \nonumber \\
&=&\frac{1}{\varepsilon }U^{\frac{\delta \varepsilon }{p}}(1)\left( \frac{1}{%
V_{n_{0}}^{\varepsilon }}+\varepsilon \widetilde{O}(1)\right) ^{\frac{1}{q}},
\label{3.13}
\end{eqnarray}%
\begin{eqnarray*}
\widetilde{I} &:&=\int_{0}^{\infty }\sum_{n=1}^{\infty }\frac{\csc h(\rho
(U^{\delta }(x)\widetilde{V}_{n})^{\gamma })}{e^{\alpha (U^{\delta }(x)%
\widetilde{V}_{n})^{\gamma }}}\widetilde{a}_{n}\widetilde{f}(x)dx \\
&=&\int_{\{x>0;0<x^{\delta }\leq 1\}}\sum_{n=1}^{\infty }\frac{\csc h(\rho
(U^{\delta }(x)\widetilde{V}_{n})^{\gamma })}{e^{\alpha (U^{\delta }(x)%
\widetilde{V}_{n})^{\gamma }}}\frac{\widetilde{V}_{n}^{\widetilde{\sigma }%
-1}\nu _{n}\mu (x)}{U^{1-\delta (\widetilde{\sigma }+\varepsilon )}(x)}dx \\
&=&\int_{\{x>0;0<x^{\delta }\leq 1\}}\omega _{\delta }(\widetilde{\sigma },x)%
\frac{\mu (x)}{U^{1-\delta \varepsilon }(x)}dx \\
&\geq &k(\widetilde{\sigma })\int_{\{x>0;0<x^{\delta }\leq 1\}}(1-\theta
_{\delta }(\widetilde{\sigma },x))\frac{\mu (x)}{U^{1-\delta \varepsilon }(x)%
}dx \\
&=&k(\widetilde{\sigma })\int_{\{x>0;0<x^{\delta }\leq
1\}}(1-O((U(x))^{\delta (\sigma -\frac{\varepsilon }{q}-\gamma )}))\frac{\mu
(x)}{U^{1-\delta \varepsilon }(x)}dx \\
&=&k(\widetilde{\sigma })\left[ \int_{\{x>0;0<x^{\delta }\leq 1\}}\frac{\mu
(x)}{U^{1-\delta \varepsilon }(x)}dx\right. \\
&&\left. -\int_{\{x>0;0<x^{\delta }\leq 1\}}O(\frac{\mu (x)}{U^{1-\delta
(\sigma -\gamma +\frac{\varepsilon }{p})}(x)})dx\right] \\
&=&\frac{1}{\varepsilon }k(\sigma -\frac{\varepsilon }{q})(U^{\delta
\varepsilon }(1)-\varepsilon O(1)).
\end{eqnarray*}

If there exists a positive constant $K\leq k(\sigma ),$ such that (\ref{3.1}%
) is valid when replacing $k(\sigma )$ to $K,$ then in particular, by
Lebesgue term by term integration theorem, we have $\varepsilon \widetilde{I}%
<\varepsilon K||\widetilde{f}||_{p,\Phi _{\delta }}||\widetilde{a}||_{q,%
\widetilde{\Psi }},$ namely,%
$$
k(\sigma -\frac{\varepsilon }{q})(U^{\delta \varepsilon }(1)-\varepsilon
O(1))<K\cdot U^{\frac{\delta \varepsilon }{p}}(1)\left( \frac{1}{%
V_{n_{0}}^{\varepsilon }}+\varepsilon \widetilde{O}(1)\right) ^{\frac{1}{q}}.
$$
It follows that $k(\sigma )\leq K(\varepsilon \rightarrow 0^{+}).$ Hence, $%
K=k(\sigma )$ is the best possible constant factor of (\ref{3.1}).

The constant factor $k(\sigma )$ in (\ref{3.2}) ((\ref{3.3})) is still the
best possible. Otherwise, we would reach a contradiction by (\ref{3.6}) ((%
\ref{3.9})) that the constant factor in (\ref{3.1}) is not the best
possible. The theorem is proved.
\end{proof}

For $p>1,$ we find%
$$
\widetilde{\Psi }^{1-p}(n)=\frac{\nu _{n}}{\widetilde{V}_{n}^{1-p\sigma }}%
(n\in \mathbf{N}),\Phi _{\delta }^{1-q}(x)=\frac{\mu (x)}{U^{1-q\delta
\sigma }(x)}(x\in \mathbf{R}_{+}),
$$
and define the following real normed spaces:%
\begin{eqnarray*}
L_{p,\Phi _{\delta }}(\mathbf{R}_{+}) &=&\{f;f=f(x),x\in \mathbf{R}%
_{+},||f||_{p,\Phi _{\delta }}<\infty \}, \\
l_{q,\widetilde{\Psi }} &=&\{a;a=\{a_{n}\}_{n=1}^{\infty },||a||_{q,%
\widetilde{\Psi }}<\infty \}, \\
L_{q,\Phi _{\delta }^{1-q}}(\mathbf{R}_{+}) &=&\{h;h=h(x),x\in \mathbf{R}%
_{+},||h||_{q,\Phi _{\delta }^{1-q}}<\infty \}, \\
l_{p,\widetilde{\Psi }^{1-p}} &=&\{c;c=\{c_{n}\}_{n=1}^{\infty },||c||_{p,%
\widetilde{\Psi }^{1-p}}<\infty \}.
\end{eqnarray*}

Assuming that $f\in L_{p,\Phi _{\delta }}(\mathbf{R}_{+}),$ setting
$$
c=\{c_{n}\}_{n=1}^{\infty },c_{n}:=\int_{0}^{\infty }\frac{\csc h(\rho
(U^{\delta }(x)\widetilde{V}_{n})^{\gamma })}{e^{\alpha (U^{\delta }(x)%
\widetilde{V}_{n})^{\gamma }}}f(x)dx,n\in \mathbf{N},
$$
we can rewrite (\ref{3.2}) as follows:
$$
||c||_{p,\widetilde{\Psi }^{1-p}}<k(\sigma )||f||_{p,\Phi _{\delta }}<\infty
,
$$
namely, $c\in l_{p,\widetilde{\Psi }^{1-p}}.$

\begin{definition}
 Define a half-discrete Hardy-Hilbert-type operator $$
T_{1}:L_{p,\Phi _{\delta }}(\mathbf{R}_{+})\rightarrow l_{p,\widetilde{\Psi }%
^{1-p}}$$ as follows:\\ 
For any $f\in L_{p,\Phi _{\delta }}(\mathbf{R}_{+}),$
there exists a unique representation $T_{1}f=c\in l_{p,\widetilde{\Psi }%
^{1-p}}.$ Define the formal inner product of $T_{1}f$ and $%
a=\{a_{n}\}_{n=1}^{\infty }\in l_{q,\widetilde{\Psi }}$ as follows:%
\begin{equation}
(T_{1}f,a):=\sum_{n=1}^{\infty }\left[ \int_{0}^{\infty }\frac{\csc h(\rho
(U^{\delta }(x)\widetilde{V}_{n})^{\gamma })}{e^{\alpha (U^{\delta }(x)%
\widetilde{V}_{n})^{\gamma }}}f(x)dx\right] a_{n}.  \label{3.14}
\end{equation}
\end{definition}

Then we can rewrite (\ref{3.1}) and (\ref{3.2}) as:%
\begin{eqnarray}
(T_{1}f,a) &<&k(\sigma )||f||_{p,\Phi _{\delta }}||a||_{q,\widetilde{\Psi }},
\label{3.15} \\
||T_{1}f||_{p,\widetilde{\Psi }^{1-p}} &<&k(\sigma )||f||_{p,\Phi _{\delta
}}.  \label{3.16}
\end{eqnarray}

Define the norm of operator $T_{1}$ as follows:%
$$
||T_{1}||:=\sup_{f(\neq \theta )\in L_{p,\Phi _{\delta }}(\mathbf{R}_{+})}%
\frac{||T_{1}f||_{p,\widetilde{\Psi }^{1-p}}}{||f||_{p,\Phi _{\delta }}}.
$$
Then by (\ref{3.16}), it is evident that $||T_{1}||\leq k(\sigma ).$ Since by
Theorem 2, the constant factor in (\ref{3.16}) is the best possible, we have%
\begin{equation}
||T_{1}||=k(\sigma )=\frac{2\Gamma (\frac{\sigma }{\gamma })}{\gamma (2\rho
)^{\sigma /\gamma }}\zeta (\frac{\sigma }{\gamma },\frac{\alpha +\rho }{%
2\rho }).  \label{3.17}
\end{equation}

Assuming that $a=\{a_{n}\}_{n=1}^{\infty }\in l_{q,\widetilde{\Psi }},$
setting
$$
h(x):=\sum_{n=1}^{\infty }\frac{\csc h(\rho (U^{\delta }(x)\widetilde{V}%
_{n})^{\gamma })}{e^{\alpha (U^{\delta }(x)\widetilde{V}_{n})^{\gamma }}}%
a_{n},x\in \mathbf{R}_{+},
$$
we can rewrite (\ref{3.3}) as follows:
$$
||h||_{q,\Phi _{\delta }^{1-q}}<k(\sigma )||a||_{q,\widetilde{\Psi }}<\infty
,
$$
namely, $h\in L_{q,\Phi _{\delta }^{1-q}}(\mathbf{R}_{+}).$

\begin{definition}
 Define a half-discrete Hardy-Hilbert-type operator $$
T_{2}:l_{q,\widetilde{\Psi }}\rightarrow L_{q,\Phi _{\delta }^{1-q}}(\mathbf{%
R}_{+})$$ as follows:\\ 
For any $a=\{a_{n}\}_{n=1}^{\infty }\in l_{q,\widetilde{%
\Psi }},$ there exists a unique representation $T_{2}a=h\in L_{q,\Phi
_{\delta }^{1-q}}(\mathbf{R}_{+}).$ Define the formal inner product of $%
T_{2}a$ and $f\in L_{p,\Phi _{\delta }}(\mathbf{R}_{+})$ by:%
\begin{equation}
(T_{2}a,f):=\int_{0}^{\infty }\left[ \sum_{n=1}^{\infty }\frac{\csc h(\rho
(U^{\delta }(x)\widetilde{V}_{n})^{\gamma })}{e^{\alpha (U^{\delta }(x)%
\widetilde{V}_{n})^{\gamma }}}a_{n}\right] f(x)dx.  \label{3.18}
\end{equation}
\end{definition}

Then we can rewrite (\ref{3.1}) and (\ref{3.3}) as follows:%
\begin{eqnarray}
(T_{2}a,f) &<&k(\sigma )||f||_{p,\Phi _{\delta }}||a||_{q,\widetilde{\Psi }},
\label{3.19} \\
||T_{2}a||_{q,\Phi _{\delta }^{1-q}} &<&k(\sigma )||a||_{q,\widetilde{\Psi }%
}.  \label{3.20}
\end{eqnarray}

Define the norm of operator $T_{2}$ by:%
$$
||T_{2}||:=\sup_{a(\neq \theta )\in l_{q,\widetilde{\Psi }}}\frac{%
||T_{2}a||_{q,\Phi _{\delta }^{1-q}}}{||a||_{q,\widetilde{\Psi }}}.
$$
Then by (\ref{3.20}), we find $||T_{2}||\leq k(\sigma ).$ Since by Theorem
2, the constant factor in (\ref{3.20}) is the best possible, we have%
\begin{equation}
||T_{2}||=k(\sigma )=\frac{2\Gamma (\frac{\sigma }{\gamma })}{\gamma (2\rho
)^{\sigma /\gamma }}\zeta (\frac{\sigma }{\gamma },\frac{\alpha +\rho }{%
2\rho })=||T_{1}||.  \label{3.21}
\end{equation}

\section{Some Equivalent Reverses}

In the following, we also set
$$
\widetilde{\Phi }_{\delta }(x):=(1-\theta _{\delta }(\sigma ,x))\frac{%
U^{p(1-\delta \sigma )-1}(x)}{\mu ^{p-1}(x)}(x\in \mathbf{R}_{+}).$$
For $0<p<1$ or $p<0,$ we still use the formal symbols $%
||f||_{p,\Phi _{\delta }}$, $||f||_{p,\widetilde{\Phi }_{\delta }}$ and $%
||a||_{q,\widetilde{\Psi }}.$

\begin{theorem}
 If $0\leq \alpha \leq \rho (\rho >0),0<\gamma <\sigma
\leq 1,k(\sigma )$ is given by (\ref{2.1}), there exists $n_{0}\in
\mathbf{N},$ such that $v_{n}\geq v_{n+1}$ $(n\in \mathbf{\{}%
n_{0},n_{0}+1,\cdots \}),$ and $U(\infty )=V(\infty )=\infty ,$ then for $%
p<0,$ $0<||f||_{p,\Phi _{\delta }},||a||_{q,\widetilde{\Psi }}<\infty ,$ we
have the following equivalent inequalities with the best possible constant
factor $k(\sigma )$:%
\begin{eqnarray}
I &=&\sum_{n=1}^{\infty }\int_{0}^{\infty }\frac{\csc h(\rho (U^{\delta }(x)%
\widetilde{V}_{n})^{\gamma })}{e^{\alpha (U^{\delta }(x)\widetilde{V}%
_{n})^{\gamma }}}a_{n}f(x)dx>k(\sigma )||f||_{p,\Phi _{\delta }}||a||_{q,%
\widetilde{\Psi }},  \label{4.1} \\
J_{1} &=&\sum_{n=1}^{\infty }\frac{\nu _{n}}{\widetilde{V}_{n}^{1-p\sigma }}%
\left[ \int_{0}^{\infty }\frac{\csc h(\rho (U^{\delta }(x)\widetilde{V}%
_{n})^{\gamma })}{e^{\alpha (U^{\delta }(x)\widetilde{V}_{n})^{\gamma }}}%
f(x)dx\right] ^{p}>k(\sigma )||f||_{p,\Phi _{\delta }},  \label{4.2} \\
J_{2} &=&\left\{ \int_{0}^{\infty }\frac{\mu (x)}{U^{1-q\delta \sigma }(x)}%
\left[ \sum_{n=1}^{\infty }\frac{\csc h(\rho (U^{\delta }(x)\widetilde{V}%
_{n})^{\gamma })}{e^{\alpha (U^{\delta }(x)\widetilde{V}_{n})^{\gamma }}}%
a_{n}\right] ^{q}dx\right\} ^{\frac{1}{q}}  \nonumber \\
&>&k(\sigma )||a||_{q,\widetilde{\Psi }}.  \label{4.3}
\end{eqnarray}
\end{theorem}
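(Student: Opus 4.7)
The plan is to mirror the proof of Theorem 1 while tracking each reversal of direction that occurs when $p<0$ (so that $q\in(0,1)$). The key new ingredient is that, under the hypothesis $U(\infty)=\infty$, Remark 1 upgrades the inequality $\varpi_\delta(\sigma,n)\le k(\sigma)$ of Lemma 2 to the equality $\varpi_\delta(\sigma,n)=k(\sigma)$ for every $n\in\mathbf{N}$; this is precisely what will close the estimate in the right direction.

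First I would apply the reverse H\"older inequality with weight to the same splitting of the integrand used in (\ref{3.4}). Reverse H\"older flips the direction once, and raising to the $p$-th power (a decreasing operation on $(0,\infty)$ since $p<0$) flips it again, so the resulting estimate for $\left[\int_0^\infty(\cdots)f(x)\,dx\right]^p$ has the same algebraic form as (\ref{3.4}). Substituting $\varpi_\delta(\sigma,n)=k(\sigma)$, multiplying by $\nu_n/\widetilde{V}_n^{1-p\sigma}$, summing over $n$, and exchanging sum with integral by Lebesgue's theorem produces
\[J_1^{p}\le k(\sigma)^{p-1}\int_{0}^{\infty}\omega_\delta(\sigma,x)\,\Phi_\delta(x)\,f^{p}(x)\,dx.\]
The upper bound (\ref{2.5}) then yields $J_1^{p}<k(\sigma)^{p}\|f\|_{p,\Phi_\delta}^{p}$, and extracting $p$-th roots (another direction reversal for $p<0$) gives (\ref{4.2}).

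Next I would prove the three inequalities equivalent by the same substitution arguments as in Theorem 1, reading every H\"older estimate in reverse. Specifically, (\ref{4.1}) follows from (\ref{4.2}) by a reverse H\"older applied to the pairing of $a_n$ with the inner integral over $x$; conversely, plugging
\[a_n:=\frac{\nu_n}{\widetilde{V}_n^{1-p\sigma}}\left[\int_0^\infty\frac{\csc h(\rho(U^\delta(x)\widetilde{V}_n)^\gamma)}{e^{\alpha(U^\delta(x)\widetilde{V}_n)^\gamma}}f(x)\,dx\right]^{p-1}\]
into (\ref{4.1}) recovers (\ref{4.2}). The loop (\ref{4.1})$\Leftrightarrow$(\ref{4.3}) is treated symmetrically using the dual splitting (\ref{3.7}) with reverse H\"older applied to the sum in $n$, together with the canonical test function $f(x):=\frac{\mu(x)}{U^{1-q\delta\sigma}(x)}\left[\sum_n(\cdots)a_n\right]^{q-1}$.

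For the sharpness of $k(\sigma)$ I would reuse the family $(\widetilde{f},\widetilde{a})$ from (\ref{3.10})--(\ref{3.11}) with $\widetilde{\sigma}=\sigma-\varepsilon/q$; the hypotheses $U(\infty)=V(\infty)=\infty$ together with $\nu_n\ge\nu_{n+1}$ eventually are precisely what keep (\ref{3.12}) and (\ref{2.9}) applicable. Assume for contradiction a constant $K>k(\sigma)$ satisfying (\ref{4.1}). Since $p<0$, the useful estimate is now an \emph{upper} bound on $\widetilde{I}$: writing $\widetilde{I}=\int_{\{x^\delta\le 1\}}\omega_\delta(\widetilde{\sigma},x)\frac{\mu(x)}{U^{1-\delta\varepsilon}(x)}\,dx$ and invoking $\omega_\delta(\widetilde{\sigma},x)<k(\widetilde{\sigma})$ from (\ref{2.5}) yields $\widetilde{I}<\frac{1}{\varepsilon}k(\widetilde{\sigma})U^{\delta\varepsilon}(1)$, while $\|\widetilde{f}\|_{p,\Phi_\delta}\|\widetilde{a}\|_{q,\widetilde{\Psi}}$ is evaluated exactly as in (\ref{3.13}). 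Combining these with $\widetilde{I}>K\|\widetilde{f}\|_{p,\Phi_\delta}\|\widetilde{a}\|_{q,\widetilde{\Psi}}$ and letting $\varepsilon\to 0^{+}$ forces $K\le k(\sigma)$, contradicting $K>k(\sigma)$; the equivalences then transfer sharpness to (\ref{4.2}) and (\ref{4.3}). The main obstacle throughout is the bookkeeping of direction changes — each use of H\"older, each passage to the $p$-th power, and each passage to the $p$-th root reverses the inequality, and the exponent $p-1$ is now negative — and the recognition that without the upgrade $\varpi_\delta(\sigma,n)=k(\sigma)$ provided by $U(\infty)=\infty$, the factor $(\varpi_\delta(\sigma,n))^{p-1}\ge k(\sigma)^{p-1}$ would drive the estimate in the wrong direction and block the argument entirely.
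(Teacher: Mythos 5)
Your proposal is correct and follows essentially the same route as the paper's own proof: the reverse H\"older inequality with weight combined with the equality $\varpi_\delta(\sigma,n)=k(\sigma)$ from Remark 1 (valid since $U(\infty)=\infty$) for the direct estimates, the same substitution arguments for the equivalences, and the same test pair $(\widetilde{f},\widetilde{a})$ with the upper bound $\omega_\delta(\widetilde{\sigma},x)<k(\widetilde{\sigma})$ for sharpness. You also correctly isolated the one point where the argument would otherwise break when $p<0$, namely that the mere bound $\varpi_\delta(\sigma,n)\le k(\sigma)$ gives $(\varpi_\delta)^{p-1}\ge k(\sigma)^{p-1}$ and hence points the wrong way, which is exactly why the paper invokes (\ref{2.7}) at that step.
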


\begin{proof}
By the reverse H\"{o}lder's inequality with weight (cf. \cite%
{K1}), since $p<0,$ similarly to the way we obtained (\ref{3.4}) and (\ref%
{3.5}), we have%
\begin{eqnarray*}
&&\left[ \int_{0}^{\infty }\frac{\csc h(\rho (U^{\delta }(x)\widetilde{V}%
_{n})^{\gamma })}{e^{\alpha (U^{\delta }(x)\widetilde{V}_{n})^{\gamma }}}%
f(x)dx\right] ^{p} \\
&\leq &\frac{\widetilde{V}_{n}^{1-p\sigma }}{(\varpi _{\delta }(\sigma
,n))^{1-p}\nu _{n}}\int_{0}^{\infty }\frac{\csc h(\rho (U^{\delta }(x)%
\widetilde{V}_{n})^{\gamma })}{e^{\alpha (U^{\delta }(x)\widetilde{V}%
_{n})^{\gamma }}}\frac{U^{(1-\delta \sigma )(p-1)}(x)\nu _{n}}{\widetilde{V}%
_{n}^{1-\sigma }\mu ^{p-1}(x)}f^{p}(x)dx,
\end{eqnarray*}%
and then by (\ref{2.7}) and Lebesgue term by term integration theorem, it
follows that%
\begin{eqnarray*}
J_{1} &\geq &(k(\sigma ))^{\frac{1}{q}}\left[ \sum_{n=1}^{\infty
}\int_{0}^{\infty }\frac{\csc h(\rho (U^{\delta }(x)\widetilde{V}%
_{n})^{\gamma })}{e^{\alpha (U^{\delta }(x)\widetilde{V}_{n})^{\gamma }}}%
\frac{U^{(1-\delta \sigma )(p-1)}(x)\nu _{n}}{\widetilde{V}_{n}^{1-\sigma
}\mu ^{p-1}(x)}f^{p}(x)dx\right] ^{\frac{1}{p}} \\
&=&(k(\sigma ))^{\frac{1}{q}}\left[ \int_{0}^{\infty }\omega _{\delta
}(\sigma ,x)\frac{U^{p(1-\delta \sigma )-1}(x)}{\mu ^{p-1}(x)}f^{p}(x)dx%
\right] ^{\frac{1}{p}}.
\end{eqnarray*}%
Then by (\ref{2.5}), we have (\ref{4.2}).

By the reverse H\"{o}lder's inequality (cf. \cite{K1}), we have%
\begin{eqnarray}
I &=&\sum_{n=1}^{\infty }\left[ \frac{\nu _{n}^{\frac{1}{p}}}{\widetilde{V}%
_{n}^{\frac{1}{p}-\sigma }}\int_{0}^{\infty }\frac{\csc h(\rho (U^{\delta
}(x)\widetilde{V}_{n})^{\gamma })}{e^{\alpha (U^{\delta }(x)\widetilde{V}%
_{n})^{\gamma }}}f(x)dx\right] \left( \frac{\widetilde{V}_{n}^{\frac{1}{p}%
-\sigma }a_{n}}{\nu _{n}^{\frac{1}{p}}}\right)  \nonumber \\
&\geq &J_{1}||a||_{q,\widetilde{\Psi }}.  \label{4.4}
\end{eqnarray}%
Then by (\ref{4.2}), we have (\ref{4.1}).

On the other hand, assuming that (\ref{4.1}) is valid, we set $a_{n}$ as in
Theorem 1. Then we find $J_{1}^{p}=||a||_{q,\widetilde{\Psi }}^{q}.$ \\
If $%
J_{1}=\infty ,$ then (\ref{4.2}) is trivially valid. \\
If $J_{1}=0,$ then (\ref%
{4.2}) is impossible.\\ 
Suppose that $0<J_{1}<\infty .$ By (\ref{4.1}), it
follows that%
\begin{eqnarray*}
||a||_{q,\widetilde{\Psi }}^{q} &=&J_{1}^{p}=I>k(\sigma )||f||_{p,\Phi
_{\delta }}||a||_{q,\widetilde{\Psi }}, \\
||a||_{q,\widetilde{\Psi }}^{q-1} &=&J_{1}>k(\sigma )||f||_{p,\Phi _{\delta
}},
\end{eqnarray*}%
and then (\ref{4.2}) follows, which is equivalent to (\ref{4.1}).

By the reverse of H\"{o}lder's inequality with weight (cf. \cite{K1}),
since $0<q<1,$ similarly to the way we obtained (\ref{3.7}) and (\ref{3.8}),
we have%
\begin{eqnarray*}
&&\left[ \sum_{n=1}^{\infty }\frac{\csc h(\rho (U^{\delta }(x)\widetilde{V}%
_{n})^{\gamma })}{e^{\alpha (U^{\delta }(x)\widetilde{V}_{n})^{\gamma }}}%
a_{n}\right] ^{q} \\
&\geq &\frac{(\omega _{\delta }(\sigma ,x))^{q-1}}{U^{q\delta \sigma
-1}(x)\mu (x)}\sum_{n=1}^{\infty }\frac{\csc h(\rho (U^{\delta }(x)%
\widetilde{V}_{n})^{\gamma })}{e^{\alpha (U^{\delta }(x)\widetilde{V}%
_{n})^{\gamma }}}\frac{\widetilde{V}_{n}^{(1-\sigma )(q-1)}\mu (x)}{%
U^{1-\delta \sigma }(x)\nu _{n}^{q-1}}a_{n}^{q},
\end{eqnarray*}%
and then by (\ref{2.5}) and Lebesgue term by term integration theorem, it
follows that%
\begin{eqnarray*}
J_{2} &>&(k(\sigma ))^{\frac{1}{p}}\left[ \int_{0}^{\infty
}\sum_{n=1}^{\infty }\frac{\csc h(\rho (U^{\delta }(x)\widetilde{V}%
_{n})^{\gamma })}{e^{\alpha (U^{\delta }(x)\widetilde{V}_{n})^{\gamma }}}%
\frac{\widetilde{V}_{n}^{(1-\sigma )(q-1)}\mu (x)}{U^{1-\delta \sigma
}(x)\nu _{n}^{q-1}}a_{n}^{q}dx\right] ^{\frac{1}{q}} \\
&=&(k(\sigma ))^{\frac{1}{p}}\left[ \sum_{n=1}^{\infty }\varpi _{\delta
}(\sigma ,n)\frac{\widetilde{V}_{n}^{q(1-\sigma )-1}}{\nu _{n}^{q-1}}%
a_{n}^{q}\right] ^{\frac{1}{q}}.
\end{eqnarray*}%
Then by (\ref{2.7}), \ we obtain (\ref{4.3}).

By the reverse H\"{o}lder's inequality (cf. \cite{K1}), we get%
\begin{eqnarray}
I &=&\int_{0}^{\infty }\left( \frac{U^{\frac{1}{q}-\delta \sigma }(x)}{\mu ^{%
\frac{1}{q}}(x)}f(x)\right) \left[ \frac{\mu ^{\frac{1}{q}}(x)}{U^{\frac{1}{q%
}-\delta \sigma }(x)}\sum_{n=1}^{\infty }\frac{\csc h(\rho (U^{\delta }(x)%
\widetilde{V}_{n})^{\gamma })}{e^{\alpha (U^{\delta }(x)\widetilde{V}%
_{n})^{\gamma }}}a_{n}\right] dx   \nonumber \\
&\geq &||f||_{p,\Phi _{\delta }}J_{2}.  \label{4.5}
\end{eqnarray}%
Then by (\ref{4.3}), we derive (\ref{4.1}).

On the other hand, assuming that (\ref{4.3}) is valid, we set $f(x)$ as in
Theorem 1. Then we find $J_{2}^{q}=||f||_{p,\Phi _{\delta }}^{p}.$\\ 
If $%
J_{2}=\infty ,$ then (\ref{4.3}) is trivially valid. \\
If $J_{2}=0,$ then (\ref%
{4.3}) keeps impossible.\\ 
Suppose that $0<J_{2}<\infty .$ By (\ref{4.1}), it
follows that%
\begin{eqnarray*}
||f||_{p,\Phi _{\delta }}^{p} &=&J_{2}^{q}=I>k(\sigma )||f||_{p,\Phi
_{\delta }}||a||_{q,\widetilde{\Psi }}, \\
||f||_{p,\Phi _{\delta }}^{p-1} &=&J_{2}>k(\sigma )||a||_{q,\widetilde{\Psi }%
},
\end{eqnarray*}%
and then (\ref{4.3}) follows, which is equivalent to (\ref{4.1}).

Therefore, inequalities (\ref{4.1}), (\ref{4.2}) and (\ref{4.3}) are
equivalent.

For $\varepsilon \in (0,q(\sigma -\gamma )),$ we set $\widetilde{\sigma }%
=\sigma -\frac{\varepsilon }{q}(\in (\gamma ,1)),$ and $\widetilde{f}=%
\widetilde{f}(x),x\in \mathbf{R}_{+},\widetilde{a}=\{\widetilde{a}%
_{n}\}_{n=1}^{\infty },$%
\begin{eqnarray*}
\widetilde{f}(x) &=&\left\{
\begin{array}{c}
U^{\delta (\widetilde{\sigma }+\varepsilon )-1}(x)\mu (x),0<x^{\delta }\leq 1
\\
0,x^{\delta }>0%
\end{array}%
\right. , \\
\widetilde{a}_{n} &=&\widetilde{V}_{n}^{\widetilde{\sigma }-1}\nu _{n}=%
\widetilde{V}_{n}^{\sigma -\frac{\varepsilon }{q}-1}\nu _{n},n\in \mathbf{N}.
\end{eqnarray*}%
By (\ref{2.9}), (\ref{3.12}) and (\ref{2.5}), we obtain%
\begin{equation}
||\widetilde{f}||_{p,\Phi _{\delta }}||\widetilde{a}||_{q,\widetilde{\Psi }}=%
\frac{1}{\varepsilon }U^{\frac{\delta \varepsilon }{p}}(1)\left( \frac{1}{%
V_{n_{0}}^{\varepsilon }}+\varepsilon \widetilde{O}(1)\right) ^{\frac{1}{q}},
\end{equation}%
\begin{eqnarray*}
\widetilde{I} &=&\sum_{n=1}^{\infty }\int_{0}^{\infty }\frac{\csc h(\rho
(U^{\delta }(x)\widetilde{V}_{n})^{\gamma })}{e^{\alpha (U^{\delta }(x)%
\widetilde{V}_{n})^{\gamma }}}\widetilde{a}_{n}\widetilde{f}(x)dx \\
&=&\int_{\{x>0;0<x^{\delta }\leq 1\}}\omega _{\delta }(\widetilde{\sigma },x)%
\frac{\mu (x)}{U^{1-\delta \varepsilon }(x)}dx \\
&\leq &k(\widetilde{\sigma })\int_{\{x>0;0<x^{\delta }\leq 1\}}\frac{\mu (x)%
}{U^{1-\delta \varepsilon }(x)}dx=\frac{1}{\varepsilon }k(\sigma -\frac{%
\varepsilon }{q})U^{\delta \varepsilon }(1).
\end{eqnarray*}

If there exists a positive constant $K\geq k(\sigma ),$ such that (\ref{4.1}%
) is valid when replacing $k(\sigma )$ by $K,$ then in particular, we have $%
\varepsilon \widetilde{I}>\varepsilon K||\widetilde{f}||_{p,\Phi _{\delta
}}||\widetilde{a}||_{q,\widetilde{\Psi }},$ namely,%
$$
k(\sigma -\frac{\varepsilon }{q})U^{\delta \varepsilon }(1)>K\cdot U^{\frac{%
\delta \varepsilon }{p}}(1)\left( \frac{1}{V_{n_{0}}^{\varepsilon }}%
+\varepsilon \widetilde{O}(1)\right) ^{\frac{1}{q}}.
$$
It follows that $k(\sigma )\geq K(\varepsilon \rightarrow 0^{+}).$ Hence, $%
K=k(\sigma )$ is the best possible constant factor of (\ref{4.1}).

The constant factor $k(\sigma )$ in (\ref{4.2}) ((\ref{4.3})) is still the
best possible. Otherwise, we would reach a contradiction by (\ref{4.4}) ((%
\ref{4.5})) that the constant factor in (\ref{4.1}) is not the best
possible. The theorem is proved.
\end{proof}

\begin{theorem}
 With the assumptions of Theorem 3, if $$0<p<1,\
0<||f||_{p,\Phi _{\delta }},\ ||a||_{q,\widetilde{\Psi }}<\infty ,$$ then we
have the following equivalent inequalities with the best possible constant
factor $k(\sigma )$:%
\begin{eqnarray}
I &=&\sum_{n=1}^{\infty }\int_{0}^{\infty }\frac{\csc h(\rho (U^{\delta }(x)%
\widetilde{V}_{n})^{\gamma })}{e^{\alpha (U^{\delta }(x)\widetilde{V}%
_{n})^{\gamma }}}a_{n}f(x)dx>k(\sigma )||f||_{p,\widetilde{\Phi }_{\delta
}}||a||_{q,\widetilde{\Psi }},  \label{4.6} \\
J_{1} &=&\sum_{n=1}^{\infty }\frac{\nu _{n}}{\widetilde{V}_{n}^{1-p\sigma }}%
\left[ \int_{0}^{\infty }\frac{\csc h(\rho (U^{\delta }(x)\widetilde{V}%
_{n})^{\gamma })}{e^{\alpha (U^{\delta }(x)\widetilde{V}_{n})^{\gamma }}}%
f(x)dx\right] ^{p}>k(\sigma )||f||_{p,\widetilde{\Phi }_{\delta }},
\label{4.7} \\
J &:&=\left\{ \int_{0}^{\infty }\frac{(1-\theta _{\delta }(\sigma
,x))^{1-q}\mu (x)}{U^{1-q\delta \sigma }(x)}\left[ \sum_{n=1}^{\infty }\frac{%
\csc h(\rho (U^{\delta }(x)\widetilde{V}_{n})^{\gamma })}{e^{\alpha
(U^{\delta }(x)\widetilde{V}_{n})^{\gamma }}}a_{n}\right] ^{q}dx\right\} ^{%
\frac{1}{q}}   \nonumber \\
&>&k(\sigma )||a||_{q,\widetilde{\Psi }}.  \label{4.8}
\end{eqnarray}
\end{theorem}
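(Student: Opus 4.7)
The plan is to mirror the structure of the proof of Theorem 3, adapting all inequality directions for $0<p<1$ (hence $q<0$). The decisive structural change is that I must invoke the \emph{lower} bound (\ref{2.8}) on $\omega_\delta(\sigma,x)$ in place of the plain upper bound (\ref{2.5}), which produces the factor $1-\theta_\delta(\sigma,x)$; this factor is absorbed into $\widetilde{\Phi}_\delta(x)$ on the right-hand sides of (\ref{4.6})--(\ref{4.7}) and into the weighted integrand of $J^q$ in (\ref{4.8}).

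To prove (\ref{4.7}), I apply the reverse H\"{o}lder inequality with weight to $\left[\int K\,f\,dx\right]^p$ via the same splitting used in (\ref{3.4}), with the direction $\leq$ replaced by $\geq$. Multiplying by $\nu_n/\widetilde{V}_n^{1-p\sigma}$, summing over $n$, interchanging by the Lebesgue theorem, and invoking $\varpi_\delta(\sigma,n)=k(\sigma)$ from (\ref{2.7}) yields $J_1^p \geq k(\sigma)^{p-1}\int \omega_\delta(\sigma,x)\,U^{p(1-\delta\sigma)-1}(x)\,\mu^{1-p}(x)\,f^p(x)\,dx$. Replacing $\omega_\delta(\sigma,x)$ by its strict lower bound from (\ref{2.8}) produces the weight $\widetilde{\Phi}_\delta(x)$ and gives (\ref{4.7}). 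A reverse H\"{o}lder in the outer sum over $n$ then yields $I\geq J_1\|a\|_{q,\widetilde{\Psi}}$, hence (\ref{4.6}); the converse implication follows by the standard substitution $a_n=(\nu_n/\widetilde{V}_n^{1-p\sigma})\left[\int K f\,dx\right]^{p-1}$ together with the three-case dichotomy on $J_1\in\{0,\infty,\text{finite}\}$ used in Theorem 3.

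For (\ref{4.8}), I apply the reverse H\"{o}lder with weight to $\sum_n K(x,n)\,a_n$ at fixed $x$ and raise to the $q$-th power (reversing direction) to obtain an \emph{upper} bound on $\left[\sum K a_n\right]^q$ of the same form as (\ref{3.7}). The exponent $1-q$ on the weight $1-\theta_\delta(\sigma,x)$ in the definition of $J^q$ is chosen precisely so that, after replacing $\omega_\delta(\sigma,x)^{q-1}$ by $[k(\sigma)(1-\theta_\delta(\sigma,x))]^{q-1}$---from (\ref{2.8}) with direction reversed since $q-1<0$---the exponents of $1-\theta_\delta$ add to $(1-q)+(q-1)=0$, leaving no residual weight. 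Interchanging and invoking (\ref{2.7}) then yields $J^q<k(\sigma)^q\|a\|_{q,\widetilde{\Psi}}^q$, and extracting the $1/q$-th power (with $1/q<0$ again reversing direction) gives (\ref{4.8}). Equivalence of (\ref{4.8}) with (\ref{4.6}) follows from reverse H\"{o}lder in $x$ with weights so tuned that one factor to the $p$-th power equals $\widetilde{\Phi}_\delta(x)f^p(x)$ while the other to the $q$-th power equals $(1-\theta_\delta(\sigma,x))^{1-q}\mu(x)\,U^{q\delta\sigma-1}(x)\left[\sum K a_n\right]^q$; the converse uses $f(x)=(1-\theta_\delta(\sigma,x))^{1-q}\mu(x)\,U^{q\delta\sigma-1}(x)\left[\sum Ka_n\right]^{q-1}$, for which a direct computation yields $\|f\|_{p,\widetilde{\Phi}_\delta}^p=J^q=I$.

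Finally, the best-possible constant is established as in Theorem 3, reusing the test family (\ref{3.10})--(\ref{3.11}) with $\widetilde{\sigma}=\sigma-\varepsilon/q\in(\gamma,1)$. The only new computation is the evaluation of $\|\widetilde{f}\|_{p,\widetilde{\Phi}_\delta}$ in place of $\|\widetilde{f}\|_{p,\Phi_\delta}$: the extra factor $1-\theta_\delta(\sigma,x)$ contributes only a term of order $\varepsilon\cdot O(1)$ relative to the leading $1/\varepsilon$ singularity, so the limit $\varepsilon\to 0^+$ is unchanged, and assuming $K\geq k(\sigma)$ in (\ref{4.6}) forces $K=k(\sigma)$. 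Optimality in (\ref{4.7})--(\ref{4.8}) follows from the quantitative implications above by the usual contradiction argument. The principal obstacle throughout is the careful bookkeeping of inequality directions: each reverse H\"{o}lder, each raising to a negative power, and the final $1/q$ extraction each flips the direction once, so the proof reduces to arranging these flips so that the $(1-\theta_\delta)$ factors land on the correct side of every estimate.
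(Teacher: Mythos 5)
Your treatment of the three inequalities and of their equivalence is correct and follows the paper's own route: reverse H\"older with weight in the directions dictated by $0<p<1$ and $q<0$, the exact identity $\varpi _{\delta }(\sigma ,n)=k(\sigma )$ from (\ref{2.7}), the lower bound (\ref{2.8}) on $\omega _{\delta }$ producing the weight $\widetilde{\Phi }_{\delta }$, and the cancellation $(1-q)+(q-1)=0$ of the $\theta $-exponents inside $J^{q}$. Your explicit substitution $f(x)=(1-\theta _{\delta }(\sigma ,x))^{1-q}\mu (x)U^{q\delta \sigma -1}(x)\left[ \sum_{n}Ka_{n}\right] ^{q-1}$ is in fact the right one for the identity $J^{q}=||f||_{p,\widetilde{\Phi }_{\delta }}^{p}$ (the paper's ``as in Theorem 1'' glosses over the needed $(1-\theta )^{1-q}$ factor).

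The gap is in the best-possibility argument. You assert that $\widetilde{\sigma }=\sigma -\frac{\varepsilon }{q}\in (\gamma ,1)$ and that the estimate of $\widetilde{I}$ goes through ``as in Theorem 3.'' But here $q<0$, so $\widetilde{\sigma }=\sigma +\varepsilon /|q|>\sigma $, and since the hypotheses allow $\sigma =1$, $\widetilde{\sigma }$ can exceed $1$. The upper bound $\omega _{\delta }(\widetilde{\sigma },x)\leq k(\widetilde{\sigma })$, i.e.\ (\ref{2.5}), which is exactly what Theorem 3's argument uses to bound $\widetilde{I}$ from above, is established only for parameters $\leq 1$ (it rests on the monotonicity/convexity and Hermite--Hadamard step of Example 1(iii)), so it is not available when $\widetilde{\sigma }>1$. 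The paper circumvents this by integrating in $x$ first, writing $\widetilde{I}\leq \sum_{n}\varpi _{\delta }(\widetilde{\sigma },n)\nu _{n}\widetilde{V}_{n}^{-1-\varepsilon }=k(\widetilde{\sigma })\sum_{n}\nu _{n}\widetilde{V}_{n}^{-1-\varepsilon }$ and invoking (\ref{2.7}) together with (\ref{2.9}); by Remark 1, (\ref{2.7}) needs no upper restriction on the parameter. Relatedly, the admissible range of $\varepsilon $ must be $(0,p(\sigma -\gamma ))$ here, not the $(0,q(\sigma -\gamma ))$ of Theorems 2--3, which is empty for $q<0$. So your observation about $||\widetilde{f}||_{p,\widetilde{\Phi }_{\delta }}$ versus $||\widetilde{f}||_{p,\Phi _{\delta }}$ is correct but is not ``the only new computation''; without replacing the $\omega _{\delta }$-based bound on $\widetilde{I}$ by the $\varpi _{\delta }$-based one, the optimality proof does not close.
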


\begin{proof}
By the reverse H\"{o}lder's inequality with weight (cf. \cite%
{K1}), since $0<p<1,$ similarly to the way we obtained (\ref{3.4}) and (\ref%
{3.5}), we have%
\begin{eqnarray*}
&&\left[ \int_{0}^{\infty }\frac{\csc h(\rho (U^{\delta }(x)\widetilde{V}%
_{n})^{\gamma })}{e^{\alpha (U^{\delta }(x)\widetilde{V}_{n})^{\gamma }}}%
f(x)dx\right] ^{p} \\
&\geq &\frac{(\varpi _{\delta }(\sigma ,n))^{p-1}}{\widetilde{V}%
_{n}^{p\sigma -1}\nu _{n}}\int_{0}^{\infty }\frac{\csc h(\rho (U^{\delta }(x)%
\widetilde{V}_{n})^{\gamma })}{e^{\alpha (U^{\delta }(x)\widetilde{V}%
_{n})^{\gamma }}}\frac{U^{(1-\delta \sigma )(p-1)}(x)\nu _{n}}{\widetilde{V}%
_{n}^{1-\sigma }\mu ^{p-1}(x)}f^{p}(x)dx,
\end{eqnarray*}%
and then in view of (\ref{2.7}) and Lebesgue term by term integration theorem, we find%
\begin{eqnarray*}
J_{1} &\geq &(k(\sigma ))^{\frac{1}{q}}\left[ \sum_{n=1}^{\infty
}\int_{0}^{\infty }\frac{\csc h(\rho (U^{\delta }(x)\widetilde{V}%
_{n})^{\gamma })}{e^{\alpha (U^{\delta }(x)\widetilde{V}_{n})^{\gamma }}}%
\frac{U^{(1-\delta \sigma )(p-1)}(x)\nu _{n}}{\widetilde{V}_{n}^{1-\sigma
}\mu ^{p-1}(x)}f^{p}(x)dx\right] ^{\frac{1}{p}} \\
&=&(k(\sigma ))^{\frac{1}{q}}\left[ \int_{0}^{\infty }\omega _{\delta
}(\sigma ,x)\frac{U^{p(1-\delta \sigma )-1}(x)}{\mu ^{p-1}(x)}f^{p}(x)dx%
\right] ^{\frac{1}{p}}.
\end{eqnarray*}%
Then by (\ref{2.8}), we have (\ref{4.7}).

By the reverse H\"{o}lder's inequality (cf. \cite{K1}), we have%
\begin{eqnarray}
I &=&\sum_{n=1}^{\infty }\left[ \frac{\nu _{n}^{\frac{1}{p}}}{\widetilde{V}%
_{n}^{\frac{1}{p}-\sigma }}\int_{0}^{\infty }\frac{\csc h(\rho (U^{\delta
}(x)\widetilde{V}_{n})^{\gamma })}{e^{\alpha (U^{\delta }(x)\widetilde{V}%
_{n})^{\gamma }}}f(x)dx\right] \left( \frac{\widetilde{V}_{n}^{\frac{1}{p}%
-\sigma }a_{n}}{\nu _{n}^{\frac{1}{p}}}\right)  \nonumber \\
&\geq &J_{1}||a||_{q,\widetilde{\Psi }}.  \label{4.9}
\end{eqnarray}%
Then by (\ref{4.7}), we have (\ref{4.6}).

On the other hand, Assuming that (\ref{4.6}) is valid, we set $a_{n}$ as in
Theorem 1. Then we find $J_{1}^{p}=||a||_{q,\widetilde{\Psi }}^{q}.$ \\
If $%
J_{1}=\infty ,$ then (\ref{4.7}) is trivially valid. \\
If $J_{1}=0,$ then (\ref%
{4.7}) remains impossible. \\
Suppose that $0<J_{1}<\infty .$ By (\ref{4.6}), it
follows that%
\begin{eqnarray*}
||a||_{q,\widetilde{\Psi }}^{q} &=&J_{1}^{p}=I>k(\sigma )||f||_{p,\widetilde{%
\Phi }_{\delta }}||a||_{q,\widetilde{\Psi }}, \\
||a||_{q,\widetilde{\Psi }}^{q-1} &=&J_{1}>k(\sigma )||f||_{p,\widetilde{%
\Phi }_{\delta }},
\end{eqnarray*}%
and then (\ref{4.7}) follows, which is equivalent to (\ref{4.6}).

By the reverse H\"{o}lder's inequality with weight (cf. \cite{K1}),
since $q<0,$ we have%
\begin{eqnarray*}
&&\left[ \sum_{n=1}^{\infty }\frac{\csc h(\rho (U^{\delta }(x)\widetilde{V}%
_{n})^{\gamma })}{e^{\alpha (U^{\delta }(x)\widetilde{V}_{n})^{\gamma }}}%
a_{n}\right] ^{q} \\
&\leq &\frac{(\omega _{\delta }(\sigma ,x))^{q-1}}{U^{q\delta \sigma
-1}(x)\mu (x)}\sum_{n=1}^{\infty }\frac{\csc h(\rho (U^{\delta }(x)%
\widetilde{V}_{n})^{\gamma })}{e^{\alpha (U^{\delta }(x)\widetilde{V}%
_{n})^{\gamma }}}\frac{\widetilde{V}_{n}^{(1-\sigma )(q-1)}\mu (x)}{%
U^{1-\delta \sigma }(x)\nu _{n}^{q-1}}a_{n}^{q},
\end{eqnarray*}%
and then by (\ref{2.8}) and Lebesgue term by term integration theorem, it
follows that%
\begin{eqnarray*}
J &>&(k(\sigma ))^{\frac{1}{p}}\left[ \int_{0}^{\infty }\sum_{n=1}^{\infty }%
\frac{\csc h(\rho (U^{\delta }(x)\widetilde{V}_{n})^{\gamma })}{e^{\alpha
(U^{\delta }(x)\widetilde{V}_{n})^{\gamma }}}\frac{\widetilde{V}%
_{n}^{(1-\sigma )(q-1)}\mu (x)}{U^{1-\delta \sigma }(x)\nu _{n}^{q-1}}%
a_{n}^{q}dx\right] ^{\frac{1}{q}} \\
&=&(k(\sigma ))^{\frac{1}{p}}\left[ \sum_{n=1}^{\infty }\varpi _{\delta
}(\sigma ,n)\frac{\widetilde{V}_{n}^{q(1-\sigma )-1}}{\nu _{n}^{q-1}}%
a_{n}^{q}\right] ^{\frac{1}{q}}.
\end{eqnarray*}%
Then by (\ref{2.7}), \ we have (\ref{4.8}).

By the reverse H\"{o}lder's inequality (cf. \cite{K1}), we have%
\begin{eqnarray}
I &=&\int_{0}^{\infty }\left[ (1-\theta _{\delta }(\sigma ,x))^{\frac{1}{p}}%
\frac{U^{\frac{1}{q}-\delta \sigma }(x)}{\mu ^{\frac{1}{q}}(x)}f(x)\right]
\nonumber \\
&&\times \left[ \frac{(1-\theta _{\delta }(\sigma ,x))^{\frac{-1}{p}}\mu ^{%
\frac{1}{q}}(x)}{U^{\frac{1}{q}-\delta \sigma }(x)}\sum_{n=1}^{\infty }\frac{%
\csc h(\rho (U^{\delta }(x)\widetilde{V}_{n})^{\gamma })}{e^{\alpha
(U^{\delta }(x)\widetilde{V}_{n})^{\gamma }}}a_{n}\right] dx  \nonumber \\
&\geq &||f||_{p,\widetilde{\Phi }_{\delta }}J.  \label{4.10}
\end{eqnarray}%
Then by (\ref{4.8}), we have (\ref{4.6}).

On the other hand, assuming that (\ref{4.6}) is valid, we set $f(x)$ as in
Theorem 1. Then we find $J^{q}=||f||_{p,\widetilde{\Phi }_{\delta }}^{p}.$\\
If $J=\infty ,$ then (\ref{4.8}) is trivially valid. \\
If $J=0,$ then (\ref%
{4.8}) remains impossible.\\ 
Suppose that $0<J<\infty .$ By (\ref{4.6}), it
follows that%
\begin{eqnarray*}
||f||_{p,\widetilde{\Phi }_{\delta }}^{p} &=&J^{q}=I>k(\sigma )||f||_{p,%
\widetilde{\Phi }_{\delta }}||a||_{q,\widetilde{\Psi }}, \\
||f||_{p,\widetilde{\Phi }_{\delta }}^{p-1} &=&J>k(\sigma )||a||_{q,%
\widetilde{\Psi }},
\end{eqnarray*}%
and then (\ref{4.8}) follows, which is equivalent to (\ref{4.6}).

Therefore, inequalities (\ref{4.6}), (\ref{4.7}) and (\ref{4.8}) are
equivalent.

For $\varepsilon \in (0,p(\sigma -\gamma )),$ we set $\widetilde{\sigma }%
=\sigma +\frac{\varepsilon }{p}(>\gamma ),$ and $\widetilde{f}=\widetilde{f}%
(x),x\in \mathbf{R}_{+},\widetilde{a}=\{\widetilde{a}_{n}\}_{n=1}^{\infty },$%
\begin{eqnarray*}
\widetilde{f}(x) &=&\left\{
\begin{array}{c}
U^{\delta \widetilde{\sigma }-1}(x)\mu (x),0<x^{\delta }\leq 1 \\
0,x^{\delta }>0%
\end{array}%
\right. , \\
\widetilde{a}_{n} &=&\widetilde{V}_{n}^{\widetilde{\sigma }-\varepsilon
-1}\nu _{n}=\widetilde{V}_{n}^{\sigma -\frac{\varepsilon }{q}-1}\nu
_{n},n\in \mathbf{N}.
\end{eqnarray*}%
By (\ref{2.8}), (\ref{2.9}) and (\ref{3.12}), we obtain%
\begin{eqnarray*}
&&||\widetilde{f}||_{p,\widetilde{\Phi }_{\delta }}||\widetilde{a}||_{q,%
\widetilde{\Psi }} \\
&=&\left[ \int_{\{x>0;0<x^{\delta }\leq 1\}}(1-O((U(x))^{\delta (\sigma
-\gamma )}))\frac{\mu (x)dx}{U^{1-\delta \varepsilon }(x)}\right] ^{\frac{1}{%
p}}\left( \sum_{n=1}^{\infty }\frac{\nu _{n}}{\widetilde{V}%
_{n}^{1+\varepsilon }}\right) ^{\frac{1}{q}} \\
&=&\frac{1}{\varepsilon }\left( U^{\delta \varepsilon }(1)-\varepsilon
O(1)\right) ^{\frac{1}{p}}\left( \frac{1}{V_{n_{0}}^{\varepsilon }}%
+\varepsilon \widetilde{O}(1)\right) ^{\frac{1}{q}},
\end{eqnarray*}%
\begin{eqnarray*}
\widetilde{I} &=&\sum_{n=1}^{\infty }\int_{0}^{\infty }\frac{\csc h(\rho
(U^{\delta }(x)\widetilde{V}_{n})^{\gamma })}{e^{\alpha (U^{\delta }(x)%
\widetilde{V}_{n})^{\gamma }}}\widetilde{a}_{n}\widetilde{f}(x)dx \\
&=&\sum_{n=1}^{\infty }\left[ \int_{\{x>0;0<x^{\delta }\leq 1\}}\frac{\csc
h(\rho (U^{\delta }(x)\widetilde{V}_{n})^{\gamma })}{e^{\alpha (U^{\delta
}(x)\widetilde{V}_{n})^{\gamma }}}\frac{\widetilde{V}_{n}^{\widetilde{\sigma
}}\mu (x)}{U^{1-\delta \widetilde{\sigma }}(x)}dx\right] \frac{\nu _{n}}{%
\widetilde{V}_{n}^{1+\varepsilon }} \\
&\leq &\sum_{n=1}^{\infty }\left[ \int_{0}^{\infty }\frac{\csc h(\rho
(U^{\delta }(x)\widetilde{V}_{n})^{\gamma })}{e^{\alpha (U^{\delta }(x)%
\widetilde{V}_{n})^{\gamma }}}\frac{\widetilde{V}_{n}^{\widetilde{\sigma }%
}\mu (x)}{U^{1-\delta \widetilde{\sigma }}(x)}dx\right] \frac{\nu _{n}}{%
\widetilde{V}_{n}^{1+\varepsilon }} \\
&=&\sum_{n=1}^{\infty }\varpi _{\delta }(\widetilde{\sigma },n)\frac{\nu _{n}%
}{\widetilde{V}_{n}^{1+\varepsilon }}=k(\widetilde{\sigma }%
)\sum_{n=1}^{\infty }\frac{\nu _{n}}{\widetilde{V}_{n}^{1+\varepsilon }} \\
&=&\frac{1}{\varepsilon}k(\sigma +\frac{\varepsilon }{p})\left( \frac{1}{V_{n_{0}}^{\varepsilon }}%
+\varepsilon \widetilde{O}(1)\right) .
\end{eqnarray*}

If there exists a positive constant $K\geq k(\sigma ),$ such that (\ref{4.1}%
) is valid when replacing $k(\sigma )$ by $K,$ then in particular, we have $%
\varepsilon \widetilde{I}>\varepsilon K||\widetilde{f}||_{p,\widetilde{\Phi }%
_{\delta }}||\widetilde{a}||_{q,\widetilde{\Psi }},$ namely,%
\begin{eqnarray*}
&&k(\sigma +\frac{\varepsilon }{p})\left( \frac{1}{V_{n_{0}}^{\varepsilon }}%
+\varepsilon \widetilde{O}(1)\right) \\
&>&K\left( U^{\delta \varepsilon }(1)-\varepsilon O(1)\right) ^{\frac{1}{p}%
}\left( \frac{1}{V_{n_{0}}^{\varepsilon }}+\varepsilon \widetilde{O}%
(1)\right) ^{\frac{1}{q}}.
\end{eqnarray*}%
It follows that $k(\sigma )\geq K(\varepsilon \rightarrow 0^{+}).$ Hence, $%
K=k(\sigma )$ is the best possible constant factor of (\ref{4.6}).

The constant factor $k(\sigma )$ in (\ref{4.7}) ((\ref{4.8})) is still the
best possible. Otherwise, we would reach a contradiction by (\ref{4.9}) ((%
\ref{4.10})) that the constant factor in (\ref{4.6}) is not the best
possible. The theorem is proved.
\end{proof}

\section{Some Particular Inequalities}

For $\widetilde{\nu }_{n}=0,\widetilde{V}_{n}=V_{n},$ we set $$\Psi (n):=%
\frac{V_{n}^{q(1-\sigma )-1}}{\nu _{n}^{q-1}}\ (n\in \mathbf{N}).$$ In view of
Theorem 2-4, we have

\begin{corollary}
 If $0\leq \alpha \leq \rho (\rho >0),0<\gamma <\sigma
\leq 1,k(\sigma )$ is given by (\ref{2.1}), there exists $n_{0}\in
\mathbf{N},$ such that $v_{n}\geq v_{n+1}$ $(n\in \mathbf{\{}%
n_{0},n_{0}+1,\cdots \}),$ and $U(\infty )=V(\infty )=\infty ,$ then\\ 
(i) for
$p>1,$ $0<||f||_{p,\Phi _{\delta }},||a||_{q,\Psi }<\infty ,$ we have the
following equivalent inequalities:%
\begin{eqnarray}
&&\sum_{n=1}^{\infty }\int_{0}^{\infty }\frac{\csc h(\rho (U^{\delta
}(x)V_{n})^{\gamma })}{e^{\alpha (U^{\delta }(x)V_{n})^{\gamma }}}%
a_{n}f(x)dx <k(\sigma )||f||_{p,\Phi _{\delta }}||a||_{q,\Psi },  \label{5.1}
\\
&&\sum_{n=1}^{\infty }\frac{\nu _{n}}{V_{n}^{1-p\sigma }}\left[
\int_{0}^{\infty }\frac{\csc h(\rho (U^{\delta }(x)V_{n})^{\gamma })}{%
e^{\alpha (U^{\delta }(x)V_{n})^{\gamma }}}f(x)dx\right] ^{p} <k(\sigma
)||f||_{p,\Phi _{\delta }},  \label{5.2}
\end{eqnarray}%
\begin{equation}
\left\{ \int_{0}^{\infty }\frac{\mu (x)}{U^{1-q\delta \sigma }(x)}\left[
\sum_{n=1}^{\infty }\frac{\csc h(\rho (U^{\delta }(x)V_{n})^{\gamma })}{%
e^{\alpha (U^{\delta }(x)V_{n})^{\gamma }}}a_{n}\right] ^{q}dx\right\} ^{%
\frac{1}{q}}<k(\sigma )||a||_{q,\Psi };  \label{5.3}
\end{equation}%
(ii) for $p<0,$ $0<||f||_{p,\Phi _{\delta }},||a||_{q,\Psi }<\infty ,$ we
have the following equivalent inequalities:%
\begin{eqnarray}
&&\sum_{n=1}^{\infty }\int_{0}^{\infty }\frac{\csc h(\rho (U^{\delta
}(x)V_{n})^{\gamma })}{e^{\alpha (U^{\delta }(x)V_{n})^{\gamma }}}%
a_{n}f(x)dx >k(\sigma )||f||_{p,\Phi _{\delta }}||a||_{q,\Psi },  \label{5.4}
\\
&&\sum_{n=1}^{\infty }\frac{\nu _{n}}{V_{n}^{1-p\sigma }}\left[
\int_{0}^{\infty }\frac{\csc h(\rho (U^{\delta }(x)V_{n})^{\gamma })}{%
e^{\alpha (U^{\delta }(x)V_{n})^{\gamma }}}f(x)dx\right] ^{p} >k(\sigma
)||f||_{p,\Phi _{\delta }},  \label{5.5}
\end{eqnarray}%
\begin{equation}
\left\{ \int_{0}^{\infty }\frac{\mu (x)}{U^{1-q\delta \sigma }(x)}\left[
\sum_{n=1}^{\infty }\frac{\csc h(\rho (U^{\delta }(x)V_{n})^{\gamma })}{%
e^{\alpha (U^{\delta }(x)V_{n})^{\gamma }}}a_{n}\right] ^{q}dx\right\} ^{%
\frac{1}{q}}>k(\sigma )||a||_{q,\Psi };  \label{5.6}
\end{equation}%
(iii) for $0<p<1,$ $0<||f||_{p,\Phi _{\delta }},||a||_{q,\Psi }<\infty ,$ we
have the following equivalent inequalities:%
\begin{eqnarray}
&&\sum_{n=1}^{\infty }\int_{0}^{\infty }\frac{\csc h(\rho (U^{\delta
}(x)V_{n})^{\gamma })}{e^{\alpha (U^{\delta }(x)V_{n})^{\gamma }}}%
a_{n}f(x)dx >k(\sigma )||f||_{p,\widetilde{\Phi }_{\delta }}||a||_{q,\Psi },
\label{5.7} \\
&&\sum_{n=1}^{\infty }\frac{\nu _{n}}{V_{n}^{1-p\sigma }}\left[
\int_{0}^{\infty }\frac{\csc h(\rho (U^{\delta }(x)V_{n})^{\gamma })}{%
e^{\alpha (U^{\delta }(x)V_{n})^{\gamma }}}f(x)dx\right] ^{p} >k(\sigma
)||f||_{p,\widetilde{\Phi }_{\delta }},  \label{5.8}
\end{eqnarray}%
\begin{eqnarray}
&&\left\{ \int_{0}^{\infty }\frac{(1-\theta _{\delta }(\sigma ,x))^{1-q}\mu
(x)}{U^{1-q\delta \sigma }(x)}\left[ \sum_{n=1}^{\infty }\frac{\csc h(\rho
(U^{\delta }(x)V_{n})^{\gamma })}{e^{\alpha (U^{\delta }(x)V_{n})^{\gamma }}}%
a_{n}\right] ^{q}dx\right\} ^{\frac{1}{q}}  \nonumber \\
&>&k(\sigma )||a||_{q,\Psi }.  \label{5.9}
\end{eqnarray}
\end{corollary}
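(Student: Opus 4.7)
The plan is to obtain Corollary 1 as a direct specialization of Theorems 2, 3, and 4 by taking $\widetilde{\nu}_n = 0$ for every $n \in \mathbf{N}$. The standing assumption at the beginning of Section 2 permits $\widetilde{\nu}_n \in [0, \nu_n/2]$, so the endpoint choice $\widetilde{\nu}_n = 0$ is admissible. Under this choice $\widetilde{V}_n = V_n - \widetilde{\nu}_n = V_n$, and consequently the weight
\[
\widetilde{\Psi}(n) = \frac{\widetilde{V}_n^{q(1-\sigma)-1}}{\nu_n^{q-1}}
\]
collapses to $\Psi(n) = V_n^{q(1-\sigma)-1}/\nu_n^{q-1}$, so that $||a||_{q,\widetilde{\Psi}} = ||a||_{q,\Psi}$.

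For part (i), I would check that every hypothesis of Theorem 2 is inherited: the parameter constraints $0 \leq \alpha \leq \rho$, $0 < \gamma < \sigma \leq 1$, the existence of $n_0 \in \mathbf{N}$ with $\nu_n \geq \nu_{n+1}$ for $n \geq n_0$, and the divergence conditions $U(\infty) = V(\infty) = \infty$ are assumed verbatim. Substituting $\widetilde{V}_n \mapsto V_n$ in inequalities (\ref{3.1})--(\ref{3.3}) then produces exactly (\ref{5.1})--(\ref{5.3}), with their equivalence and the best-possibility of $k(\sigma)$ transferring automatically from Theorem 2. Parts (ii) and (iii) follow the same route, applying Theorems 3 and 4 respectively under the substitution $\widetilde{\nu}_n = 0$, yielding (\ref{5.4})--(\ref{5.6}) and (\ref{5.7})--(\ref{5.9}).

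The only step that requires a second glance is part (iii), because the alternative weight $\widetilde{\Phi}_\delta(x) = (1 - \theta_\delta(\sigma,x)) U^{p(1-\delta\sigma)-1}(x)/\mu^{p-1}(x)$ persists, and the factor $(1 - \theta_\delta(\sigma,x))^{1-q}$ appears in (\ref{5.9}). Here I would remark that $\theta_\delta(\sigma,x)$ was defined in Lemma 3(i) through a lower estimate of $\omega_\delta(\sigma,x)$ that used only $\widetilde{V}_n \leq V(n+\tfrac{1}{2})$ and $\widetilde{V}_n \geq V(n)$; both of these inequalities continue to hold (trivially) with $\widetilde{V}_n = V_n$, so $\theta_\delta(\sigma,x)$ retains its meaning and bound $O((U(x))^{\delta(\sigma-\gamma)})$. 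No new estimate is needed.

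There is essentially no obstacle: the entire argument is a one-parameter specialization, and no calculation beyond observing $0 \in [0, \nu_n/2]$ and tracking the resulting identifications $\widetilde{V}_n = V_n$, $\widetilde{\Psi} = \Psi$ is required. The mild verification that Lemma 3 and the associated quantity $\theta_\delta(\sigma,x)$ behave correctly under the specialization is the only place where a reader might want a sentence of explanation, and it is immediate.
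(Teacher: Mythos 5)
Your proposal is correct and coincides with the paper's own derivation: the paper obtains Corollary~1 precisely by setting $\widetilde{\nu}_{n}=0$ (hence $\widetilde{V}_{n}=V_{n}$ and $\widetilde{\Psi}=\Psi$) and invoking Theorems~2--4. Your additional remark that $\theta_{\delta}(\sigma,x)$ and the bound $O((U(x))^{\delta(\sigma-\gamma)})$ survive this specialization is correct and merely makes explicit a point the paper leaves implicit.
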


 The above inequalities have the best possible constant factor $%
k(\sigma ).$

In particular, for $\delta =1,$ we have the following inequalities with the
non-homogen-eous kernel:

\begin{corollary}
 If $0\leq \alpha \leq \rho (\rho >0),0<\gamma <\sigma
\leq 1,k(\sigma )$ is given by (\ref{2.1}), there exists $n_{0}\in
\mathbf{N},$ such that $v_{n}\geq v_{n+1}$ $(n\in \mathbf{\{}%
n_{0},n_{0}+1,\cdots \}),$ and $U(\infty )=V(\infty )=\infty ,$ then\\ 
(i) for
$p>1,$ $0<||f||_{p,\Phi _{1}},||a||_{q,\Psi }<\infty ,$ we have the
following equivalent inequalities:%
\begin{eqnarray}
&&\sum_{n=1}^{\infty }\int_{0}^{\infty }\frac{\csc h(\rho
(U(x)V_{n})^{\gamma })}{e^{\alpha (U(x)V_{n})^{\gamma }}}a_{n}f(x)dx
<k(\sigma )||f||_{p,\Phi _{1}}||a||_{q,\Psi },  \label{5.10} \\
&&\sum_{n=1}^{\infty }\frac{\nu _{n}}{V_{n}^{1-p\sigma }}\left[
\int_{0}^{\infty }\frac{\csc h(\rho (U(x)V_{n})^{\gamma })}{e^{\alpha
(U(x)V_{n})^{\gamma }}}f(x)dx\right] ^{p} <k(\sigma )||f||_{p,\Phi _{1}},
\label{5.11}
\end{eqnarray}%
\begin{equation}
\left\{ \int_{0}^{\infty }\frac{\mu (x)}{U^{1-q\sigma }(x)}\left[
\sum_{n=1}^{\infty }\frac{\csc h(\rho (U(x)V_{n})^{\gamma })}{e^{\alpha
(U(x)V_{n})^{\gamma }}}a_{n}\right] ^{q}dx\right\} ^{\frac{1}{q}}<k(\sigma
)||a||_{q,\Psi };  \label{5.12}
\end{equation}%
(ii) for $p<0,$ $0<||f||_{p,\Phi _{1}},||a||_{q,\Psi }<\infty ,$ we have the
following equivalent inequalities:%
\begin{eqnarray}
&&\sum_{n=1}^{\infty }\int_{0}^{\infty }\frac{\csc h(\rho
(U(x)V_{n})^{\gamma })}{e^{\alpha (U(x)V_{n})^{\gamma }}}a_{n}f(x)dx
>k(\sigma )||f||_{p,\Phi _{1}}||a||_{q,\Psi },  \label{5.13} \\
&&\sum_{n=1}^{\infty }\frac{\nu _{n}}{V_{n}^{1-p\sigma }}\left[
\int_{0}^{\infty }\frac{\csc h(\rho (U(x)V_{n})^{\gamma })}{e^{\alpha
(U(x)V_{n})^{\gamma }}}f(x)dx\right] ^{p} >k(\sigma )||f||_{p,\Phi _{1}},
\label{5.14}
\end{eqnarray}%
\begin{equation}
\left\{ \int_{0}^{\infty }\frac{\mu (x)}{U^{1-q\sigma }(x)}\left[
\sum_{n=1}^{\infty }\frac{\csc h(\rho (U(x)V_{n})^{\gamma })}{e^{\alpha
(U(x)V_{n})^{\gamma }}}a_{n}\right] ^{q}dx\right\} ^{\frac{1}{q}}>k(\sigma
)||a||_{q,\Psi };  \label{5.15}
\end{equation}%
(iii) for $0<p<1,$ $0<||f||_{p,\Phi _{1}},||a||_{q,\Psi }<\infty ,$ we have
the following equivalent inequalities:%
\begin{eqnarray}
&&\sum_{n=1}^{\infty }\int_{0}^{\infty }\frac{\csc h(\rho
(U(x)V_{n})^{\gamma })}{e^{\alpha (U(x)V_{n})^{\gamma }}}a_{n}f(x)dx
>k(\sigma )||f||_{p,\widetilde{\Phi }_{1}}||a||_{q,\Psi },  \label{5.16} \\
&&\sum_{n=1}^{\infty }\frac{\nu _{n}}{V_{n}^{1-p\sigma }}\left[
\int_{0}^{\infty }\frac{\csc h(\rho (U(x)V_{n})^{\gamma })}{e^{\alpha
(U(x)V_{n})^{\gamma }}}f(x)dx\right] ^{p} >k(\sigma )||f||_{p,\widetilde{%
\Phi }_{1}},  \label{5.17}
\end{eqnarray}%
\begin{eqnarray}
&&\left\{ \int_{0}^{\infty }\frac{(1-\theta _{1}(\sigma ,x))^{1-q}\mu (x)}{%
U^{1-q\sigma }(x)}\left[ \sum_{n=1}^{\infty }\frac{\csc h(\rho
(U(x)V_{n})^{\gamma })}{e^{\alpha (U(x)V_{n})^{\gamma }}}a_{n}\right]
^{q}dx\right\} ^{\frac{1}{q}}  \nonumber \\
&>&k(\sigma )||a||_{q,\Psi }.  \label{5.18}
\end{eqnarray}
\end{corollary}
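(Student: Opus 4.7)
The plan is to obtain Corollary 2 as a direct specialization of Corollary 1 by choosing $\delta = 1$, so the inequalities \eqref{5.1}--\eqref{5.9} transform into \eqref{5.10}--\eqref{5.18} with the non-homogeneous kernel $\csc h(\rho(U(x)V_n)^\gamma)/e^{\alpha(U(x)V_n)^\gamma}$. Therefore the main task is to show that Corollary 1 itself is valid, from which Corollary 2 follows by substitution.

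For Corollary 1, I would specialize the main results of Section 3 and Section 4 to the case $\widetilde{\nu}_n = 0$, which gives $\widetilde{V}_n = V_n$ and hence $\widetilde{\Psi}(n) = V_n^{q(1-\sigma)-1}/\nu_n^{q-1} = \Psi(n)$. This choice is admissible since the hypothesis $\widetilde{\nu}_n \in [0, \nu_n/2]$ explicitly includes $0$. With this substitution, the weighted norms $\|a\|_{q,\widetilde{\Psi}}$ appearing in Theorems 1, 2, 3, 4 become $\|a\|_{q,\Psi}$ verbatim, and every occurrence of $\widetilde{V}_n$ in the kernel collapses to $V_n$. One then reads off parts (i), (ii), (iii) of Corollary 1 from Theorems 1--2, Theorem 3, and Theorem 4 respectively, keeping the same structural hypotheses ($0 \le \alpha \le \rho$, $0 < \gamma < \sigma \le 1$, the monotonicity tail condition on $\nu_n$, and $U(\infty) = V(\infty) = \infty$). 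The best-possibility of $k(\sigma)$ transfers from those theorems without modification, since the construction of the test sequences $(\widetilde{f}, \widetilde{a})$ in the proofs of Theorems 2--4 used only $\widetilde{V}_n \le V_n$ and the general estimates of Lemma 3, both of which remain valid (with equality $\widetilde{V}_n = V_n$) in the present specialization.

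With Corollary 1 established, Corollary 2 is obtained by setting $\delta = 1$. Then $U^\delta(x) = U(x)$, the exponent $p(1-\delta\sigma) - 1$ in $\Phi_\delta$ becomes $p(1-\sigma) - 1$, and $q\delta\sigma = q\sigma$, which produces the precise weights $\Phi_1$ and $\widetilde{\Phi}_1$ featured in \eqref{5.10}--\eqref{5.18}. The three regimes $p > 1$, $p < 0$, and $0 < p < 1$ follow line-by-line from parts (i), (ii), (iii) of Corollary 1. Best-possibility is again inherited.

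I do not expect any genuine obstacle: the result is essentially a transcription of Corollary 1 with $\delta = 1$, and all analytic work (the weight-coefficient estimates of Lemma 2, the Hermite--Hadamard-based comparison in Lemma 3, the Hölder and reverse-Hölder arguments in Theorems 1--4, and the Hurwitz zeta identification of $k(\sigma)$ in \eqref{2.1}) has already been carried out. The only point requiring care is bookkeeping of the exponents $p(1-\delta\sigma) - 1$ and $1 - q\delta\sigma$ when $\delta = 1$, to confirm that the non-homogeneous form displayed in \eqref{5.10}--\eqref{5.18} matches \eqref{5.1}--\eqref{5.9} under this specialization; this is routine.
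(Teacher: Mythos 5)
Your proposal is correct and follows exactly the paper's own route: the paper obtains Corollary 1 by setting $\widetilde{\nu}_n=0$ (so $\widetilde{V}_n=V_n$ and $\widetilde{\Psi}=\Psi$) in Theorems 1--4, and then derives Corollary 2 as the case $\delta=1$ of Corollary 1, with best-possibility of $k(\sigma)$ inherited directly. No gaps; this is the intended specialization argument.
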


The above inequalities involve the best possible constant factor $%
k(\sigma ).$

For $\delta =-1,$ we have the following inequalities with the homogeneous
kernel of degree 0:

\begin{corollary}
 If $0\leq \alpha \leq \rho (\rho >0),0<\gamma <\sigma
\leq 1,k(\sigma )$ is given by (\ref{2.1}), there exists $n_{0}\in
\mathbf{N},$ such that $v_{n}\geq v_{n+1}$ $(n\in \mathbf{\{}%
n_{0},n_{0}+1,\cdots \}),$ and $U(\infty )=V(\infty )=\infty ,$ then\\ 
(i) for
$p>1,$ $0<||f||_{p,\Phi _{-1}},||a||_{q,\Psi }<\infty ,$ we have the
following equivalent inequalities:%
\begin{eqnarray}
&&\sum_{n=1}^{\infty }\int_{0}^{\infty }\frac{\csc h(\rho (\frac{V_{n}}{U(x)}%
)^{\gamma })}{e^{\alpha (\frac{V_{n}}{U(x)})^{\gamma }}}a_{n}f(x)dx
<k(\sigma )||f||_{p,\Phi _{-1}}||a||_{q,\Psi },  \label{5.19} \\
&&\sum_{n=1}^{\infty }\frac{\nu _{n}}{V_{n}^{1-p\sigma }}\left[
\int_{0}^{\infty }\frac{\csc h(\rho (\frac{V_{n}}{U(x)})^{\gamma })}{%
e^{\alpha (\frac{V_{n}}{U(x)})^{\gamma }}}f(x)dx\right] ^{p} <k(\sigma
)||f||_{p,\Phi _{-1}},  \label{5.20}
\end{eqnarray}%
\begin{equation}
\left\{ \int_{0}^{\infty }\frac{\mu (x)}{U^{1+q\sigma }(x)}\left[
\sum_{n=1}^{\infty }\frac{\csc h(\rho (\frac{V_{n}}{U(x)})^{\gamma })}{%
e^{\alpha (\frac{V_{n}}{U(x)})^{\gamma }}}a_{n}\right] ^{q}dx\right\} ^{%
\frac{1}{q}}<k(\sigma )||a||_{q,\Psi };  \label{5.21}
\end{equation}%
(ii) for $p<0,$ $0<||f||_{p,\Phi _{-1}},||a||_{q,\Psi }<\infty ,$ we have
the following equivalent inequalities:%
\begin{eqnarray}
&&\sum_{n=1}^{\infty }\int_{0}^{\infty }\frac{\csc h(\rho (\frac{V_{n}}{U(x)}%
)^{\gamma })}{e^{\alpha (\frac{V_{n}}{U(x)})^{\gamma }}}a_{n}f(x)dx
>k(\sigma )||f||_{p,\Phi _{-1}}||a||_{q,\Psi },  \label{5.22} \\
&&\sum_{n=1}^{\infty }\frac{\nu _{n}}{V_{n}^{1-p\sigma }}\left[
\int_{0}^{\infty }\frac{\csc h(\rho (\frac{V_{n}}{U(x)})^{\gamma })}{%
e^{\alpha (\frac{V_{n}}{U(x)})^{\gamma }}}f(x)dx\right] ^{p} >k(\sigma
)||f||_{p,\Phi _{-1}},  \label{5.23}
\end{eqnarray}%
\begin{equation}
\left\{ \int_{0}^{\infty }\frac{\mu (x)}{U^{1+q\sigma }(x)}\left[
\sum_{n=1}^{\infty }\frac{\csc h(\rho (\frac{V_{n}}{U(x)})^{\gamma })}{%
e^{\alpha (\frac{V_{n}}{U(x)})^{\gamma }}}a_{n}\right] ^{q}dx\right\} ^{%
\frac{1}{q}}>k(\sigma )||a||_{q,\Psi };  \label{5.24}
\end{equation}%
(iii) for $0<p<1,$ $0<||f||_{p,\Phi _{-1}},||a||_{q,\Psi }<\infty ,$ we have
the following equivalent inequalities:%
\begin{eqnarray}
&&\sum_{n=1}^{\infty }\int_{0}^{\infty }\frac{\csc h(\rho (\frac{V_{n}}{U(x)}%
)^{\gamma })}{e^{\alpha (\frac{V_{n}}{U(x)})^{\gamma }}}a_{n}f(x)dx
>k(\sigma )||f||_{p,\widetilde{\Phi }_{-1}}||a||_{q,\Psi },  \label{5.25} \\
&&\sum_{n=1}^{\infty }\frac{\nu _{n}}{V_{n}^{1-p\sigma }}\left[
\int_{0}^{\infty }\frac{\csc h(\rho (\frac{V_{n}}{U(x)})^{\gamma })}{%
e^{\alpha (\frac{V_{n}}{U(x)})^{\gamma }}}f(x)dx\right] ^{p} >k(\sigma
)||f||_{p,\widetilde{\Phi }_{-1}},  \label{5.26}
\end{eqnarray}%
\begin{eqnarray}
&&\left\{ \int_{0}^{\infty }\frac{(1-\theta _{-1 }(\sigma ,x))^{1-q}\mu
(x)}{U^{1+q\sigma }(x)}\left[ \sum_{n=1}^{\infty }\frac{\csc h(\rho (\frac{%
V_{n}}{U(x)})^{\gamma })}{e^{\alpha (\frac{V_{n}}{U(x)})^{\gamma }}}a_{n}%
\right] ^{q}dx\right\} ^{\frac{1}{q}}  \nonumber \\
&>&k(\sigma )||a||_{q,\Psi }.  \label{5.27}
\end{eqnarray}
\end{corollary}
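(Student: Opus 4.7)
The plan is to obtain Corollary 3 as a direct specialization of Theorems 2, 3, and 4 by setting $\delta=-1$ and $\widetilde{\nu}_n=0$. First, taking $\widetilde{\nu}_n=0$ gives $\widetilde{V}_n=V_n$, so $\widetilde{\Psi}(n)=V_n^{q(1-\sigma)-1}/\nu_n^{q-1}=\Psi(n)$, matching the $\Psi$ defined at the start of Section 5. Next, with $\delta=-1$ we have $U^{\delta}(x)=1/U(x)$, so the kernel $\csc h(\rho(U^{\delta}(x)\widetilde{V}_n)^\gamma)/e^{\alpha(U^{\delta}(x)\widetilde{V}_n)^\gamma}$ reduces to $\csc h(\rho(V_n/U(x))^\gamma)/e^{\alpha(V_n/U(x))^\gamma}$, which, as a function of $U(x)$ and $V_n$, is homogeneous of degree $0$.

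Under these substitutions, the weight factor $\mu(x)/U^{1-q\delta\sigma}(x)$ in (3.3), (4.3), and (4.8) becomes $\mu(x)/U^{1+q\sigma}(x)$, matching the denominators in (5.21), (5.24), and (5.27). Likewise $\Phi_{-1}(x)=U^{p(1+\sigma)-1}(x)/\mu^{p-1}(x)$ governs $\|f\|_{p,\Phi_{-1}}$, while $\widetilde{\Phi}_{-1}(x)=(1-\theta_{-1}(\sigma,x))\Phi_{-1}(x)$ governs $\|f\|_{p,\widetilde{\Phi}_{-1}}$, and the exponent $(1-\theta_{-1}(\sigma,x))^{1-q}$ on the right of (5.27) is inherited from the factor $(1-\theta_{\delta}(\sigma,x))^{1-q}$ in (4.8).

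With these identifications in place, part (i) is immediate: the equivalence of (5.19)--(5.21) follows from Theorem 1, and the best possibility of the constant $k(\sigma)$ follows from Theorem 2. Part (ii) follows in exactly the same way by specializing Theorem 3 to $\delta=-1$ and $\widetilde{\nu}_n=0$, giving the reverses (5.22)--(5.24). Part (iii) follows by specializing Theorem 4, giving (5.25)--(5.27). It only remains to note that the hypotheses of Theorems 2--4 used, namely $0\le\alpha\le\rho$, $0<\gamma<\sigma\le 1$, the eventual monotonicity $\nu_n\ge\nu_{n+1}$ for $n\ge n_0$, and $U(\infty)=V(\infty)=\infty$, are all assumed in the statement of Corollary 3.

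Consequently, there is no substantive obstacle: the corollary is a bookkeeping specialization of Theorems 2--4, whose content is to record the inequalities for the homogeneous-degree-zero kernel $\csc h(\rho(V_n/U(x))^\gamma)/e^{\alpha(V_n/U(x))^\gamma}$ together with the explicit $\delta=-1$ forms of the weights. The only mildly delicate point is making sure that the $(1-\theta_{-1}(\sigma,x))^{1-q}$ factor in (5.27) corresponds correctly to the factor appearing in (4.8) under the present sign convention ($0<p<1$, $q<0$), but this is a direct transcription from Theorem 4.
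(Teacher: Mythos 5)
Your proposal is correct and matches the paper's own treatment: the paper derives this corollary precisely by setting $\widetilde{\nu}_{n}=0$ (so $\widetilde{V}_{n}=V_{n}$ and $\widetilde{\Psi}=\Psi$) and $\delta=-1$ in Theorems 1--4, which is exactly the bookkeeping specialization you carry out. Your verifications of the resulting weights $U^{1-q\delta\sigma}(x)=U^{1+q\sigma}(x)$, $\Phi_{-1}$, $\widetilde{\Phi}_{-1}$, and the $(1-\theta_{-1}(\sigma,x))^{1-q}$ factor are all accurate.
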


 The above inequalities involve the best possible constant factor $%
k(\sigma ).$

For $\alpha =\rho $ in Theorem 2-4, we have

\begin{corollary}
 If $\rho >0,0<\gamma <\sigma \leq 1,$%
\begin{equation}
K(\sigma ):=\frac{2\Gamma (\frac{\sigma }{\gamma })\zeta (\frac{\sigma }{%
\gamma })}{\gamma (2\rho )^{\sigma /\gamma }},  \label{5.28}
\end{equation}
there exists $n_{0}\in \mathbf{N},$ such that $v_{n}\geq v_{n+1}$ $(n\in \mathbf{\{}n_{0},n_{0}+1,\cdots \}),$ and $U(\infty
)=V(\infty )=\infty ,$ then\\ 
(i) for $p>1,$ $0<||f||_{p,\Phi _{\delta
}},||a||_{q,\widetilde{\Psi }}<\infty ,$ we have the following equivalent
inequalities with the best possible constant factor $K(\sigma )$:%
\begin{eqnarray}
&&\sum_{n=1}^{\infty }\int_{0}^{\infty }\frac{\csc h(\rho (U^{\delta }(x)%
\widetilde{V}_{n})^{\gamma })}{e^{\rho (U^{\delta }(x)\widetilde{V}%
_{n})^{\gamma }}}a_{n}f(x)dx <K(\sigma )||f||_{p,\Phi _{\delta }}||a||_{q,%
\widetilde{\Psi }},  \label{5.29} \\
&&\sum_{n=1}^{\infty }\frac{\nu _{n}}{\widetilde{V}_{n}^{1-p\sigma }}\left[
\int_{0}^{\infty }\frac{\csc h(\rho (U^{\delta }(x)\widetilde{V}%
_{n})^{\gamma })}{e^{\rho (U^{\delta }(x)\widetilde{V}_{n})^{\gamma }}}f(x)dx%
\right] ^{p} <K(\sigma )||f||_{p,\Phi _{\delta }},  \label{5.30}
\end{eqnarray}%
\begin{equation}
\left\{ \int_{0}^{\infty }\frac{\mu (x)}{U^{1-q\delta \sigma }(x)}\left[
\sum_{n=1}^{\infty }\frac{\csc h(\rho (U^{\delta }(x)\widetilde{V}%
_{n})^{\gamma })}{e^{\rho (U^{\delta }(x)\widetilde{V}_{n})^{\gamma }}}a_{n}%
\right] ^{q}dx\right\} ^{\frac{1}{q}}<K(\sigma )||a||_{q,\widetilde{\Psi }}.
\label{5.31}
\end{equation}%
(ii) for $p<0,0<||f||_{p,\Phi _{\delta }},||a||_{q,\widetilde{\Psi }}<\infty
,$ we have the following equivalent inequalities with the best possible
constant factor $K(\sigma ):$%
\begin{eqnarray}
&&\sum_{n=1}^{\infty }\int_{0}^{\infty }\frac{\csc h(\rho (U^{\delta }(x)%
\widetilde{V}_{n})^{\gamma })}{e^{\rho (U^{\delta }(x)\widetilde{V}%
_{n})^{\gamma }}}a_{n}f(x)dx >K(\sigma )||f||_{p,\Phi _{\delta }}||a||_{q,%
\widetilde{\Psi }},  \label{5.32} \\
&&\sum_{n=1}^{\infty }\frac{\nu _{n}}{\widetilde{V}_{n}^{1-p\sigma }}\left[
\int_{0}^{\infty }\frac{\csc h(\rho (U^{\delta }(x)\widetilde{V}%
_{n})^{\gamma })}{e^{\rho (U^{\delta }(x)\widetilde{V}_{n})^{\gamma }}}f(x)dx%
\right] ^{p} >K(\sigma )||f||_{p,\Phi _{\delta }},  \label{5.33}
\end{eqnarray}%
\begin{equation}
\left\{ \int_{0}^{\infty }\frac{\mu (x)}{U^{1-q\delta \sigma }(x)}\left[
\sum_{n=1}^{\infty }\frac{\csc h(\rho (U^{\delta }(x)\widetilde{V}%
_{n})^{\gamma })}{e^{\rho (U^{\delta }(x)\widetilde{V}_{n})^{\gamma }}}a_{n}%
\right] ^{q}dx\right\} ^{\frac{1}{q}}>K(\sigma )||a||_{q,\widetilde{\Psi }};
\label{5.34}
\end{equation}%
(iii) for $0<p<1,0<||f||_{p,\Phi _{\delta }},||a||_{q,\widetilde{\Psi }%
}<\infty ,$ we have the following equivalent inequalities with the best
possible constant factor $K(\sigma )$:%
\begin{eqnarray}
&&\sum_{n=1}^{\infty }\int_{0}^{\infty }\frac{\csc h(\rho (U^{\delta }(x)%
\widetilde{V}_{n})^{\gamma })}{e^{\rho (U^{\delta }(x)\widetilde{V}%
_{n})^{\gamma }}}a_{n}f(x)dx >K(\sigma )||f||_{p,\widetilde{\Phi }_{\delta
}}||a||_{q,\widetilde{\Psi }},  \label{5.35} \\
&&\sum_{n=1}^{\infty }\frac{\nu _{n}}{\widetilde{V}_{n}^{1-p\sigma }}\left[
\int_{0}^{\infty }\frac{\csc h(\rho (U^{\delta }(x)\widetilde{V}%
_{n})^{\gamma })}{e^{\rho (U^{\delta }(x)\widetilde{V}_{n})^{\gamma }}}f(x)dx%
\right] ^{p} >K(\sigma )||f||_{p,\widetilde{\Phi }_{\delta }},
\label{5.36}
\end{eqnarray}%
\begin{eqnarray}
&&\left\{ \int_{0}^{\infty }\frac{(1-\theta _{\delta }(\sigma ,x))^{1-q}\mu
(x)}{U^{1-q\delta \sigma }(x)}\left[ \sum_{n=1}^{\infty }\frac{\csc h(\rho
(U^{\delta }(x)\widetilde{V}_{n})^{\gamma })}{e^{\rho (U^{\delta }(x)%
\widetilde{V}_{n})^{\gamma }}}a_{n}\right] ^{q}dx\right\} ^{\frac{1}{q}}
\nonumber \\
&>&K(\sigma )||a||_{q,\widetilde{\Psi }}.  \label{5.37}
\end{eqnarray}
\end{corollary}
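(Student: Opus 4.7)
The plan is to derive Corollary 4 as a direct specialization of Theorems 2, 3, and 4 by setting $\alpha=\rho$. Since the hypothesis $0\le\alpha\le\rho\ (\rho>0)$ holds trivially as equality at the endpoint $\alpha=\rho$, and all remaining hypotheses on $\mu,\nu,\sigma,\gamma,\widetilde{V}_n,U,V$ are carried over verbatim, the three cases (i), (ii), (iii) of the corollary are precisely the $\alpha=\rho$ instances of Theorems 2, 3, and 4 respectively.

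The only substantive step is to verify that the constant $k(\sigma)$ produced by (\ref{2.1}) specializes to $K(\sigma)$ in (\ref{5.28}). For this I would plug $\alpha=\rho$ into (\ref{2.1}): the shift parameter of the Hurwitz zeta function becomes
$$\frac{\alpha+\rho}{2\rho}=\frac{2\rho}{2\rho}=1,$$
so, by the identity $\zeta(s,1)=\zeta(s)$ already noted after (\ref{2.1}),
$$k(\sigma)\Big|_{\alpha=\rho}=\frac{2\Gamma(\sigma/\gamma)}{\gamma(2\rho)^{\sigma/\gamma}}\,\zeta\!\left(\frac{\sigma}{\gamma},1\right)=\frac{2\Gamma(\sigma/\gamma)\zeta(\sigma/\gamma)}{\gamma(2\rho)^{\sigma/\gamma}}=K(\sigma),$$
which agrees with (\ref{5.28}). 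One must also check convergence, i.e.\ that $\sigma/\gamma>1$, which is guaranteed by the standing assumption $0<\gamma<\sigma\le 1$.

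With this identification in hand, each of the nine displayed inequalities (\ref{5.29})--(\ref{5.37}) is obtained by setting $\alpha=\rho$ in the corresponding inequality among (\ref{3.1})--(\ref{3.3}), (\ref{4.1})--(\ref{4.3}), and (\ref{4.6})--(\ref{4.8}); the equivalence within each case is inherited from the equivalence proved in the parent theorem, and the best-possibility of $K(\sigma)$ is inherited from the best-possibility statement in Theorems 2, 3, and 4 respectively.

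I do not expect any genuine obstacle in this proof; the argument is a mechanical specialization, and the only nontrivial observation is the collapse $\zeta(s,1)=\zeta(s)$ which turns the Hurwitz zeta value into the Riemann zeta value at $s=\sigma/\gamma$.
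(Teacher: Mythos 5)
Your proposal is correct and coincides with the paper's own treatment: the paper offers no separate proof, introducing the corollary with the phrase ``For $\alpha =\rho $ in Theorem 2--4, we have,'' i.e.\ exactly the mechanical specialization you describe. The key identification $k(\sigma )|_{\alpha =\rho }=\frac{2\Gamma (\sigma /\gamma )\zeta (\sigma /\gamma )}{\gamma (2\rho )^{\sigma /\gamma }}=K(\sigma )$ via $\zeta (s,1)=\zeta (s)$ is likewise already recorded in Example 1(i), particular case (1), so nothing further is needed.
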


In particular, for $\gamma =\frac{\sigma }{2},\theta _{\delta }(\sigma
,x)=O((U(x))^{\frac{\delta \sigma }{2}}),$ \\
(i) for $p>1,$ we have the
following equivalent inequalities with the best possible constant factor $%
\frac{\pi ^{2}}{6\sigma \rho ^{2}}$:
\begin{eqnarray}
&&\sum_{n=1}^{\infty }\int_{0}^{\infty }\frac{\csc h(\rho (U^{\delta }(x)%
\widetilde{V}_{n})^{\sigma /2})}{e^{\rho (U^{\delta }(x)\widetilde{V}%
_{n})^{\sigma /2}}}a_{n}f(x)dx<\frac{\pi ^{2}}{6\sigma \rho ^{2}}%
||f||_{p,\Phi _{\delta }}||a||_{q,\widetilde{\Psi }},  \label{5.38} \\
&&\sum_{n=1}^{\infty }\frac{\nu _{n}}{\widetilde{V}_{n}^{1-p\sigma }}\left[
\int_{0}^{\infty }\frac{\csc h(\rho (U^{\delta }(x)\widetilde{V}%
_{n})^{\sigma /2})}{e^{\rho (U^{\delta }(x)\widetilde{V}_{n})^{\sigma /2}}}%
f(x)dx\right] ^{p}<\frac{\pi ^{2}}{6\sigma \rho ^{2}}||f||_{p,\Phi _{\delta
}},  \label{5.39}
\end{eqnarray}%
\begin{equation}
\left\{ \int_{0}^{\infty }\frac{\mu (x)}{U^{1-q\delta \sigma }(x)}\left[
\sum_{n=1}^{\infty }\frac{\csc h(\rho (U^{\delta }(x)\widetilde{V}%
_{n})^{\sigma /2})}{e^{\rho (U^{\delta }(x)\widetilde{V}_{n})^{\sigma /2}}}%
a_{n}\right] ^{q}dx\right\} ^{\frac{1}{q}}<\frac{\pi ^{2}}{6\sigma \rho ^{2}}%
||a||_{q,\widetilde{\Psi }};  \label{5.40}
\end{equation}%
(ii) for $p<0,$we have the following equivalent inequalities with the best
possible constant factor $\frac{\pi ^{2}}{6\sigma \rho ^{2}}$:%
\begin{eqnarray}
&&\sum_{n=1}^{\infty }\int_{0}^{\infty }\frac{\csc h(\rho (U^{\delta }(x)%
\widetilde{V}_{n})^{\sigma /2})}{e^{\rho (U^{\delta }(x)\widetilde{V}%
_{n})^{\sigma /2}}}a_{n}f(x)dx>\frac{\pi ^{2}}{6\sigma \rho ^{2}}%
||f||_{p,\Phi _{\delta }}||a||_{q,\widetilde{\Psi }},  \label{5.41} \\
&&\sum_{n=1}^{\infty }\frac{\nu _{n}}{\widetilde{V}_{n}^{1-p\sigma }}\left[
\int_{0}^{\infty }\frac{\csc h(\rho (U^{\delta }(x)\widetilde{V}%
_{n})^{\sigma /2})}{e^{\rho (U^{\delta }(x)\widetilde{V}_{n})^{\sigma /2}}}%
f(x)dx\right] ^{p}>\frac{\pi ^{2}}{6\sigma \rho ^{2}}||f||_{p,\Phi _{\delta
}},  \label{5.42}
\end{eqnarray}%
\begin{equation}
\left\{ \int_{0}^{\infty }\frac{\mu (x)}{U^{1-q\delta \sigma }(x)}\left[
\sum_{n=1}^{\infty }\frac{\csc h(\rho (U^{\delta }(x)\widetilde{V}%
_{n})^{\sigma /2})}{e^{\rho (U^{\delta }(x)\widetilde{V}_{n})^{\sigma /2}}}%
a_{n}\right] ^{q}dx\right\} ^{\frac{1}{q}}>\frac{\pi ^{2}}{6\sigma \rho ^{2}}%
||a||_{q,\widetilde{\Psi }};  \label{5.43}
\end{equation}%
(iii) for $0<p<1,$we have the following equivalent inequalities with the
best possible constant factor $\frac{\pi ^{2}}{6\sigma \rho ^{2}}$:%
\begin{eqnarray}
&&\sum_{n=1}^{\infty }\int_{0}^{\infty }\frac{\csc h(\rho (U^{\delta }(x)%
\widetilde{V}_{n})^{\sigma /2})}{e^{\rho (U^{\delta }(x)\widetilde{V}%
_{n})^{\sigma /2}}}a_{n}f(x)dx>\frac{\pi ^{2}}{6\sigma \rho ^{2}}||f||_{p,%
\widetilde{\Phi }_{\delta }}||a||_{q,\widetilde{\Psi }},  \label{5.44} \\
&&\sum_{n=1}^{\infty }\frac{\nu _{n}}{\widetilde{V}_{n}^{1-p\sigma }}\left[
\int_{0}^{\infty }\frac{\csc h(\rho (U^{\delta }(x)\widetilde{V}%
_{n})^{\sigma /2})}{e^{\rho (U^{\delta }(x)\widetilde{V}_{n})^{\sigma /2}}}%
f(x)dx\right] ^{p}>\frac{\pi ^{2}}{6\sigma \rho ^{2}}||f||_{p,\widetilde{%
\Phi }_{\delta }},  \label{5.45}
\end{eqnarray}%
\begin{eqnarray}
&&\left\{ \int_{0}^{\infty }\frac{(1-\theta _{\delta }(\sigma ,x))^{1-q}\mu
(x)}{U^{1-q\delta \sigma }(x)}\left[ \sum_{n=1}^{\infty }\frac{\csc h(\rho
(U^{\delta }(x)\widetilde{V}_{n})^{\sigma /2})}{e^{\rho (U^{\delta }(x)%
\widetilde{V}_{n})^{\sigma /2}}}a_{n}\right] ^{q}dx\right\} ^{\frac{1}{q}}
\nonumber \\
&>&\frac{\pi ^{2}}{6\sigma \rho ^{2}}||a||_{q,\widetilde{\Psi }}.
\label{5.46}
\end{eqnarray}

For $\alpha =0,\gamma =\frac{\sigma }{2},\theta _{\delta }(\sigma
,x)=O((U(x))^{\frac{\delta \sigma }{2}})$ in Theorem 2-4, we have

\begin{corollary}
 If $\rho >0,0<\sigma \leq 1,$ there exists $%
n_{0}\in \mathbf{N},$ such that $v_{n}\geq  v_{n+1}$ $(n\in
\mathbf{\{}n_{0},n_{0}+1,\cdots \}),$ and $U(\infty )=V(\infty )=\infty ,$
then\\ 
(i) for $p>1,$ $0<||f||_{p,\Phi _{\delta }},||a||_{q,\widetilde{\Psi }%
}<\infty ,$ we have the following equivalent inequalities with the best
possible constant factor $\frac{\pi ^{2}}{2\sigma \rho ^{2}}$:%
\begin{eqnarray}
&&\sum_{n=1}^{\infty }\int_{0}^{\infty }\csc h(\rho (U^{\delta }(x)%
\widetilde{V}_{n})^{\frac{\sigma }{2}})a_{n}f(x)dx<\frac{\pi ^{2}}{2\sigma
\rho ^{2}}||f||_{p,\Phi _{\delta }}||a||_{q,\widetilde{\Psi }},  \label{5.47}
\\
&&\sum_{n=1}^{\infty }\frac{\nu _{n}}{\widetilde{V}_{n}^{1-p\sigma }}\left[
\int_{0}^{\infty }\csc h(\rho (U^{\delta }(x)\widetilde{V}_{n})^{\frac{%
\sigma }{2}})f(x)dx\right] ^{p}<\frac{\pi ^{2}}{2\sigma \rho ^{2}}%
||f||_{p,\Phi _{\delta }},  \label{5.48}
\end{eqnarray}%
\begin{equation}
\left\{ \int_{0}^{\infty }\frac{\mu (x)}{U^{1-q\delta \sigma }(x)}\left[
\sum_{n=1}^{\infty }\csc h(\rho (U^{\delta }(x)\widetilde{V}_{n})^{\frac{%
\sigma }{2}})a_{n}\right] ^{q}dx\right\} ^{\frac{1}{q}}<\frac{\pi ^{2}}{%
2\sigma \rho ^{2}}||a||_{q,\widetilde{\Psi }};  \label{5.49}
\end{equation}%
(ii) for $p<0,0<||f||_{p,\Phi _{\delta }},||a||_{q,\widetilde{\Psi }}<\infty
,$ we have the following equivalent inequalities with the best possible
constant factor $\frac{\pi ^{2}}{2\sigma \rho ^{2}}$:%
\begin{eqnarray}
&&\sum_{n=1}^{\infty }\int_{0}^{\infty }\csc h(\rho (U^{\delta }(x)%
\widetilde{V}_{n})^{\frac{\sigma }{2}})a_{n}f(x)dx>\frac{\pi ^{2}}{2\sigma
\rho ^{2}}||f||_{p,\Phi _{\delta }}||a||_{q,\widetilde{\Psi }},  \label{5.50}
\\
&&\sum_{n=1}^{\infty }\frac{\nu _{n}}{\widetilde{V}_{n}^{1-p\sigma }}\left[
\int_{0}^{\infty }\csc h(\rho (U^{\delta }(x)\widetilde{V}_{n})^{\frac{%
\sigma }{2}})f(x)dx\right] ^{p}>\frac{\pi ^{2}}{2\sigma \rho ^{2}}%
||f||_{p,\Phi _{\delta }},  \label{5.51}
\end{eqnarray}%
\begin{equation}
\left\{ \int_{0}^{\infty }\frac{\mu (x)}{U^{1-q\delta \sigma }(x)}\left[
\sum_{n=1}^{\infty }\csc h(\rho (U^{\delta }(x)\widetilde{V}_{n})^{\frac{%
\sigma }{2}})a_{n}\right] ^{q}dx\right\} ^{\frac{1}{q}}>\frac{\pi ^{2}}{%
2\sigma \rho ^{2}}||a||_{q,\widetilde{\Psi }};  \label{5.52}
\end{equation}%
(iii) for $0<p<1,0<||f||_{p,\Phi _{\delta }},||a||_{q,\widetilde{\Psi }%
}<\infty ,$ we have the following equivalent inequalities with the best
possible constant factor $\frac{\pi ^{2}}{2\sigma \rho ^{2}}$:%
\begin{eqnarray}
&&\sum_{n=1}^{\infty }\int_{0}^{\infty }\csc h(\rho (U^{\delta }(x)%
\widetilde{V}_{n})^{\frac{\sigma }{2}})a_{n}f(x)dx>\frac{\pi ^{2}}{2\sigma
\rho ^{2}}||f||_{p,\widetilde{\Phi }_{\delta }}||a||_{q,\widetilde{\Psi }},
\label{5.53} \\
&&\sum_{n=1}^{\infty }\frac{\nu _{n}}{\widetilde{V}_{n}^{1-p\sigma }}\left[
\int_{0}^{\infty }\csc h(\rho (U^{\delta }(x)\widetilde{V}_{n})^{\frac{%
\sigma }{2}})f(x)dx\right] ^{p}>\frac{\pi ^{2}}{2\sigma \rho ^{2}}||f||_{p,%
\widetilde{\Phi }_{\delta }},  \label{5.54}
\end{eqnarray}%
\begin{eqnarray}
&&\left\{ \int_{0}^{\infty }\frac{(1-\theta _{\delta }(\sigma ,x))^{1-q}\mu
(x)}{U^{1-q\delta \sigma }(x)}\left[ \sum_{n=1}^{\infty }\csc h(\rho
(U^{\delta }(x)\widetilde{V}_{n})^{\frac{\sigma }{2}})a_{n}\right]
^{q}dx\right\} ^{\frac{1}{q}}  \nonumber \\
&>&\frac{\pi ^{2}}{2\sigma \rho ^{2}}||a||_{q,\widetilde{\Psi }}.
\label{5.55}
\end{eqnarray}
\end{corollary}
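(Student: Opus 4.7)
The plan is to derive Corollary 5 as the direct specialization $\alpha = 0,\ \gamma = \sigma/2$ of Theorems 2, 3, and 4. Under this substitution, the hypothesis $0 < \gamma < \sigma \leq 1$ of those theorems reduces to $0 < \sigma/2 < \sigma \leq 1$, which is equivalent to the condition $0 < \sigma \leq 1$ in Corollary 5; the condition $0 \leq \alpha \leq \rho$ is trivially satisfied by $\alpha = 0$; and the hypotheses $U(\infty) = V(\infty) = \infty$ together with the eventual monotonicity $\nu_n \geq \nu_{n+1}$ from some $n_0$ carry over verbatim. The kernel simplifies because $e^{\alpha(U^{\delta}(x)\widetilde{V}_{n})^{\gamma}} = 1$ when $\alpha = 0$, so
\[
\frac{\csc h(\rho(U^{\delta}(x)\widetilde{V}_{n})^{\gamma})}{e^{\alpha(U^{\delta}(x)\widetilde{V}_{n})^{\gamma}}} \;=\; \csc h\!\left(\rho(U^{\delta}(x)\widetilde{V}_{n})^{\sigma/2}\right),
\]
which is exactly the kernel appearing in (5.47)--(5.55).

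The one computational step of substance is the evaluation of the constant $k(\sigma)$ from (2.1) at these parameters. Substituting $\alpha = 0$ and $\gamma = \sigma/2$ yields $\sigma/\gamma = 2$, so
\[
k(\sigma) \;=\; \frac{2\,\Gamma(2)}{(\sigma/2)(2\rho)^{2}}\,\zeta\!\left(2,\tfrac{1}{2}\right).
\]
Using $\Gamma(2) = 1$ together with the standard identity
\[
\zeta\!\left(2,\tfrac{1}{2}\right) \;=\; \sum_{k=0}^{\infty}\frac{1}{(k+\tfrac{1}{2})^{2}} \;=\; 4\sum_{k=0}^{\infty}\frac{1}{(2k+1)^{2}} \;=\; 4\cdot\frac{\pi^{2}}{8} \;=\; \frac{\pi^{2}}{2},
\]
one obtains $k(\sigma) = \pi^{2}/(2\sigma\rho^{2})$, which is the proposed best constant in (5.47)--(5.55).

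With the kernel form and the constant identified, the three groups of inequalities follow directly: Theorem 2 (with $p > 1$) yields the equivalent triple (5.47)--(5.49); Theorem 3 (with $p < 0$) yields (5.50)--(5.52); and Theorem 4 (with $0 < p < 1$) yields (5.53)--(5.55). In each case the best-possibility of $\pi^{2}/(2\sigma\rho^{2})$ is inherited from the corresponding best-possibility statement for $k(\sigma)$ established in the theorem.

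There is no real obstacle in this proof: Corollary 5 is a pure specialization, and the only nontrivial verification is the explicit value $\zeta(2,\tfrac{1}{2}) = \pi^{2}/2$, which follows at once from $\sum_{k\text{ odd}} k^{-2} = \tfrac{3}{4}\zeta(2) = \pi^{2}/8$. The remaining work is bookkeeping: confirming that the weight functions $\Phi_\delta$, $\widetilde{\Phi}_\delta$, and $\widetilde{\Psi}$ carry over unchanged under the substitution, and that the quantity $\theta_{\delta}(\sigma,x)$ from Lemma 3 indeed takes the claimed order $O((U(x))^{\delta\sigma/2})$ when $\gamma = \sigma/2$ (so that $\sigma - \gamma = \sigma/2$), which matches the description preceding the statement of Corollary 5.
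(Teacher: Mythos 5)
Your proposal is correct and coincides with the paper's own route: the authors obtain Corollary 5 precisely by setting $\alpha =0$ and $\gamma =\frac{\sigma }{2}$ in Theorems 2--4, with the value $k(\sigma )=\frac{2\Gamma (2)}{(\sigma /2)(2\rho )^{2}}\zeta (2,\frac{1}{2})=\frac{\pi ^{2}}{2\sigma \rho ^{2}}$ already computed in Example 1(i), case (2). Your verification of $\zeta (2,\frac{1}{2})=\frac{\pi ^{2}}{2}$ and of the order $\theta _{\delta }(\sigma ,x)=O((U(x))^{\delta \sigma /2})$ matches what the paper uses.
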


\begin{remark}
(i) For $\mu (x)=\nu _{n}=1$ in (\ref{5.1}), we
have the following inequality with the best possible constant factor $%
k(\sigma ):$%
\begin{eqnarray}
&&\sum_{n=1}^{\infty }\int_{0}^{\infty }\frac{\csc h(\rho (x^{\delta
}n)^{\gamma })}{e^{\alpha (x^{\delta }n)^{\gamma }}}a_{n}f(x)dx   \\
&<&k(\sigma )\left[ \int_{0}^{\infty }x^{p(1-\delta \sigma )-1}f^{p}(x)dx%
\right] ^{\frac{1}{p}}\left[ \sum_{n=1}^{\infty }n^{q(1-\sigma )-1}a_{n}^{q}%
\right] ^{\frac{1}{q}}.  \label{5.56}
\end{eqnarray}
\end{remark}

In particular, for $\delta =1,$ we have the following inequality with the
non-homogeneous kernel:%
\begin{eqnarray}
&&\sum_{n=1}^{\infty }\int_{0}^{\infty }\frac{\csc h(\rho (xn)^{\gamma })}{%
e^{\alpha (xn)^{\gamma }}}a_{n}f(x)dx   \\
&<&k(\sigma )\left[ \int_{0}^{\infty }x^{p(1-\sigma )-1}f^{p}(x)dx\right] ^{%
\frac{1}{p}}\left[ \sum_{n=1}^{\infty }n^{q(1-\sigma )-1}a_{n}^{q}\right] ^{%
\frac{1}{q}};  \label{5.57}
\end{eqnarray}%
for $\delta =-1,$ we have the following inequality with the homogeneous
kernel:%
\begin{eqnarray}
&&\sum_{n=1}^{\infty }\int_{0}^{\infty }\frac{\csc h(\rho (\frac{n}{x}%
)^{\gamma })}{e^{\alpha (\frac{n}{x})^{\gamma }}}a_{n}f(x)dx   \\
&<&k(\sigma )\left[ \int_{0}^{\infty }x^{p(1+\sigma )-1}f^{p}(x)dx\right] ^{%
\frac{1}{p}}\left[ \sum_{n=1}^{\infty }n^{q(1-\sigma )-1}a_{n}^{q}\right] ^{%
\frac{1}{q}}.  \label{5.58}
\end{eqnarray}

(ii) For $\mu (x)=\nu _{n}=1,$ $\widetilde{\nu }_{n}=\tau \in (0,%
\frac{1}{2}]$ in (\ref{3.1}), we have the following more accurate inequality
than (\ref{5.28}) with the best possible constant factor $k(\sigma ):$%
\begin{eqnarray}
&&\sum_{n=1}^{\infty }\int_{0}^{\infty }\frac{\csc h(\rho \lbrack x^{\delta
}(n-\tau )]^{\gamma })}{e^{\alpha (x^{\delta }(n-\tau )]^{\gamma })^{\gamma
}}}a_{n}f(x)dx   \\
&<&k(\sigma )\left[ \int_{0}^{\infty }x^{p(1-\delta \sigma )-1}f^{p}(x)dx%
\right] ^{\frac{1}{p}}\left[ \sum_{n=1}^{\infty }(n-\tau )^{q(1-\sigma
)-1}a_{n}^{q}\right] ^{\frac{1}{q}}.  \label{5.59}
\end{eqnarray}

In particular, for $\delta =1,$ we have the following inequality with the
non-homogeneous kernel:%
\begin{eqnarray}
&&\sum_{n=1}^{\infty }\int_{0}^{\infty }\frac{\csc h([x(n-\tau )]^{\gamma })%
}{e^{\alpha \{[x(n-\tau )]\}^{\gamma }}}a_{n}f(x)dx   \\
&<&k(\sigma )\left[ \int_{0}^{\infty }x^{p(1-\sigma )-1}f^{p}(x)dx\right] ^{%
\frac{1}{p}}\left[ \sum_{n=1}^{\infty }(n-\tau )^{q(1-\sigma )-1}a_{n}^{q}%
\right] ^{\frac{1}{q}};  \label{5.60}
\end{eqnarray}%
for $\delta =-1,$ we have the following inequality with the homogeneous
kernel:%
\begin{eqnarray}
&&\sum_{n=1}^{\infty }\int_{0}^{\infty }\frac{\csc h(\rho (\frac{n-\tau }{x}%
)^{\gamma })}{e^{\alpha (\frac{n-\tau }{x})^{\gamma }}}a_{n}f(x)dx   \\
&<&k(\sigma )\left[ \int_{0}^{\infty }x^{p(1+\sigma )-1}f^{p}(x)dx\right] ^{%
\frac{1}{p}}\left[ \sum_{n=1}^{\infty }(n-\tau )^{q(1-\sigma )-1}a_{n}^{q}%
\right] ^{\frac{1}{q}}.  \label{5.61}
\end{eqnarray}

We can still obtain a large number of other inequalities by using some
special parameters in the above Theorems and Corollaries.
$$\,\,$$
\textbf{Acknowledgements.}
B. Yang: This work is supported by the
National Natural Science Foundation of China (No. 61370186), and 2013
Knowledge Construction Special Foundation Item of Guangdong Institution of
Higher Learning College and University (No. 2013KJCX0140).

\end{document}